\theoremstyle{plain}
  \newtheorem{theorem}{Theorem}[section]
  \newtheorem{proposition}[theorem]{Proposition}
  \newtheorem{lemma}[theorem]{Lemma}
  \newtheorem{corollary}[theorem]{Corollary}
\theoremstyle{definition}
  \newtheorem{example}[theorem]{Example}
\numberwithin{equation}{section}
\newcommand{\integers}{\mathbb{Z}}
\newcommand{\field}{\mathbb{K}}
\newcommand{\SCMk}{${\mathrm{SCM}}/\field$}
\newcommand{\SHCM}{${\mathrm{SHCM}}$}
\newcommand{\CMk}{${\mathrm{CM}}/\field$}
\newcommand{\HCM}{${\mathrm{HCM}}$}
\newcommand{\link}{\mathrm{lk}}
\newcommand{\Star}{\mathrm{st}}
\newcommand{\cone}{\mathrm{Cone}}
\newcommand{\fdel}{\mathrm{fdel}}
\newcommand{\rank}{\mathrm{rank}}
\newcommand{\etal}{et al\mbox{.}}
\begin{document}
  
  \title[Complexes of Injective Words]%
  {Complexes of Injective Words and Their Commutation Classes}
  
  \author{Jakob Jonsson}
  
  \address{
	Department of Mathematics \\
	KTH \\
	10044 Stockholm \\
	Sweden}

  \email{jakobj@math.kth.se}
  
  \author{Volkmar Welker}
  \address{Fachbereich Mathematik und Informatik\\
    Philipps-Universit\"at Marburg\\
    35032 Marburg, Germany}
  \email{welker@mathematik.uni-marburg.de}
  
  \thanks{First author supported by Graduiertenkolleg `Combinatorics, Geometry, Computation`, DFG-GRK 588/2.  
    Both authors were supported by EU
    Research Training Network 
    ``Algebraic Combinatorics in Europe'', grant HPRN-CT-2001-00272.}
  
  \keywords{injective word, Boolean cell complex, simplicial complex, Cohen
  Macaulay complex, shellable complex}
  
  \subjclass{ }
  
  \begin{abstract}
    Let $S$ be a finite alphabet.
    An injective word over $S$ is a word over $S$
    such that each letter in $S$ appears at most once in the word.
    We study Boolean cell complexes of injective words over 
    $S$ and their commutation classes.
    This generalizes work by Farmer and by Bj\"orner and Wachs on
    the complex of all injective words. 
    Specifically, for an abstract simplicial complex $\Delta$, we
    consider the Boolean cell complex $\Gamma(\Delta)$ whose cells are
    indexed by all injective words over the sets forming the faces of 
    $\Delta$. 
    \begin{itemize}
    \item[$\triangleright$]
      For a partial order $P = (S, \leq_P)$ on $S$, we study the 
      Boolean cell complex $\Gamma(\Delta,P)$ of all words from
      $\Gamma(\Delta)$ whose sequence of letters 
      comes from a linear extension of $P$. 
    \item[$\triangleright$]
      For a graph $G = (S,E)$ on vertex set $S$, we study the
      Boolean cell complex $\Gamma/G(\Delta)$ whose cells are
      indexed by commutation classes $[w]$ of words from 
      $\Gamma(\Delta)$. More precisely, $[w]$ consists of all
      words that can be obtained from $w$ by successively 
      applying commutations
      of neighboring letters not 
      joined by an edge of $G$.
    \end{itemize}
    
    \noindent Our main results are as follows:
    \begin{itemize}
    \item[$\triangleright$]
      If $\Delta$ is shellable then so are $\Gamma(\Delta,P)$ and 
      $\Gamma/G(\Delta)$.
    \item[$\triangleright$] If $\Delta$ is Cohen-Macaulay (resp. 
      sequentially Cohen-Macaulay) then so are 
      $\Gamma(\Delta,P)$ and $\Gamma/G(\Delta)$.
    \item[$\triangleright$] The complex $\Gamma(\Delta)$ is
      partitionable.
    \end{itemize}
  \end{abstract} 

  \maketitle
  
  \section{Introduction}

  A word $\omega$ over a finite alphabet $S$ is called {\em injective} if 
  no letter appears more than once; that is $\omega  = \omega_1 \cdots
  \omega_r$ for  
  some $\omega_1, \ldots, \omega_r \in S$
  and $\omega_i \neq \omega_j$ for $1 \leq i < j \leq r$. 
  For $n + 1 = \# S$ we denote by $\Gamma_n$ the set of all injective
  words on $S$. A word  $\omega  = \omega_1 \cdots \omega_r$ 
  with $r$ letters is said to be of length $r$.
  A subword of a word $\omega_1 \cdots \omega_r$ is a word $\omega_{j_1}
  \cdots \omega_{j_s}$ such that $1 \leq j_1 < \cdots < j_s \leq r$.
  Clearly, a subword of an injective word is injective. 
  We order $\Gamma_n$ by saying that $\rho_1 \cdots \rho_s
  \preceq   \omega_1 \cdots \omega_r$ if and only if $\rho_1 \cdots \rho_s$
  is a subword of $\omega_1 \cdots \omega_r$. We write 
  $c(w)$ for the content $\{ \omega_1, \ldots, \omega_r \}$ of the
  word $w = \omega_1 \cdots \omega_r$. Then for any $A
  \subseteq c(w)$ there is a unique subword $v \preceq w$ of $w$ with
  $A = c(v)$.  
  This implies the well known fact (see \cite{Farmer}) that
  $\Gamma_n$ together with the partial order $\preceq$ is the face poset of
  a Boolean cell complex. Recall that a Boolean cell complex is a 
  regular CW-complex for which the poset of faces of each cell is
  a Boolean lattice. Clearly, simplicial complexes are special cases
  of Boolean cell complexes. From now on we will identify the poset $\Gamma_n$
  with the Boolean cell complex with face poset $\Gamma_n$. 
  In particular, we also identify the injective words of
  length $d+1$ with $d$-cells. Thus the faces of a given $d$-cell
  $w$ are the cells corresponding to all subwords of $w$.

  The complex $\Gamma_n$ is a well-studied object. Farmer
  \cite{Farmer} demonstrated that $\Gamma_n$ is homotopy equivalent to
  a wedge of spheres of top dimension. Bj\"orner and Wachs \cite{BjornerWachs}
  proved the stronger result that $\Gamma_n$ is shellable. See Reiner
  and Webb \cite{ReinerWebb} and Hanlon and Hersh \cite{HanlonHersh} for further
  refinements. Our generalizations are partly motivated by 
  specific examples of complexes of injective words that are
  used in algebraic $K$-theory; see e.g. \cite{Gerdes,MirzaiivanderKallen,Kerz,Knudson,Suslin,vanderKallen}.
  We will make this connection a bit more precise after Example \ref{matroidexample}. 

  All our simplicial complexes and Boolean cell
  complexes are assumed to be finite. 

  In this paper, we generalize $\Gamma_n$ in three directions:
  \begin{itemize}
  \item
    Given a simplicial subcomplex $\Delta$ on ground set $S$, we define a
    subcomplex $\Gamma(\Delta)$ of $\Gamma_n$ by restricting to 
    injective words $w \in \Gamma_n$ such that 
    the content $c(w)$ is a face of $\Delta$.
  \item
    Given a partially ordered set $P = (S,{\le_P})$ on the alphabet
    $S$, we define a subcomplex of $\Gamma_n$ by restricting to words
    $\omega_1 \cdots \omega_r$ such that $i<j$ whenever $\omega_i <_P
    \omega_j$. For a simplicial 
    complex $\Delta$ on $S$ we write $\Gamma(\Delta,P)$ for the
    set of all words $w \in \Gamma(\Delta)$ satisfying this restriction.
    In particular, $\Gamma(\Delta,P) \cong \Delta$ if $P$ is a total
    order and $\Gamma(\Delta,P) = \Gamma(\Delta)$ 
    if $P$ is an antichain.
  \item
    Given a graph $G = (S,E)$ on the alphabet $S$, we define the 
    equivalence class $[w]$ of an injective word $w \in \Gamma_n$ as the
    set of all words $v$ that can be obtained from $w$ by
    applying a sequence of commutations $ss' \rightarrow s's$ such
    that $\{s,s'\}$ is not an edge in $E$.
    For a simplicial complex $\Delta$ over $S$ we write
    $\Gamma/G(\Delta)$ for the set of equivalence classes $[w]$ of injective 
    words $w$ with content $c(w)$ in $\Delta$.  
    We order $\Gamma/G(\Delta)$ by saying $[v] \preceq [w]$ if there
    are representatives $v' \in [v]$ and $w' \in [w]$ such that
    $v' \preceq w'$. In particular, if $E = \emptyset$ then $\Gamma/G(\Delta) 
    \cong \Gamma(\Delta)$.
  \end{itemize}

  It is easy to see that $\Gamma(\Delta)$ and $\Gamma(\Delta,P)$ are
  lower order ideals in $\Gamma_n$ and therefore can be seen as
  subcomplexes of $\Gamma_n$. This in turn implies that we can also regard
  them as Boolean cell complexes. Slightly more care is needed to
  recognize $\Gamma/G(\Delta)$ as a Boolean cell complex. 

  \begin{lemma}
    $\Gamma/G(\Delta)$ is a Boolean cell complex.
  \end{lemma}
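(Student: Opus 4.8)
The plan is to reduce everything to a single combinatorial fact about how commutation interacts with passing to subwords, and then to verify that the resulting face poset has Boolean lower intervals; by the standard characterization of the face posets of regular CW complexes, this is exactly what is needed. Throughout I write $v|_A$ for the unique subword of an injective word $v$ with content $A \subseteq c(v)$, which exists and is unique by the fact quoted in the introduction.

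\textbf{The restriction lemma.} First I would isolate the following statement, which is the heart of the argument: if $v$ and $v'$ are commutation-equivalent and $A \subseteq c(v)$, then $v|_A$ and $v'|_A$ are again commutation-equivalent. It suffices to prove this when $v'$ arises from $v$ by a single admissible commutation $ss' \to s's$ (with $\{s,s'\} \notin E$) and then to induct on the length of a commutation sequence. For one swap there are two cases. If at most one of $s,s'$ lies in $A$, then deleting all letters outside $A$ leaves the relative order of the letters of $A$ unchanged, so $v|_A = v'|_A$ literally. If both $s,s'$ lie in $A$, then, being adjacent in $v$, they remain adjacent in $v|_A$, and $v'|_A$ differs from $v|_A$ precisely by the admissible commutation of this pair; hence $v|_A \sim v'|_A$. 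This is the only step that uses the hypothesis $\{s,s'\} \notin E$.

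\textbf{Lower intervals are Boolean.} Using the lemma I would show that for each class $[w]$ the assignment $[v] \mapsto c(v)$ is an order isomorphism from the principal order ideal $\{[v] : [v] \preceq [w]\}$, augmented by the empty class as $\hat 0$, onto the Boolean lattice $2^{c(w)}$. Surjectivity is immediate: for $A \subseteq c(w)$ the inclusion $w|_A \preceq w$ shows $[w|_A] \preceq [w]$ with content $A$. For injectivity, suppose $[v_1],[v_2] \preceq [w]$ both have content $A$. By definition of $\preceq$ there are representatives $v_i' \in [v_i]$ and $w_i' \in [w]$ with $v_i' \preceq w_i'$; since $v_i'$ is then the unique subword of $w_i'$ with content $A$, we have $v_i' = w_i'|_A$. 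As $w_1' \sim w_2'$, the restriction lemma gives $w_1'|_A \sim w_2'|_A$, that is $[v_1] = [v_2]$. Order-preservation in both directions is routine ($c(\cdot)$ is monotone under $\preceq$, and $A \subseteq A' \subseteq c(w)$ gives $w|_A \preceq w|_{A'}$). Thus each lower interval is a Boolean lattice of rank $|c(w)|$.

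\textbf{Poset axioms and conclusion.} It remains to confirm that $\preceq$ is genuinely a partial order. Antisymmetry is easy: $[v] \preceq [w]$ and $[w] \preceq [v]$ force $c(v) = c(w)$, whence a witnessing inclusion of representatives with equal content is an equality. For transitivity, given $[u] \preceq [v] \preceq [w]$ with $c(u) = A$, I would chase representatives through the lemma: a witness for the first relation yields $u_1 = v_1|_A$ with $v_1 \in [v]$, and a witness for the second yields $v_2 = w_2|_{c(v)}$ with $w_2 \in [w]$; applying the lemma to $v_1 \sim v_2$ gives $u_1 = v_1|_A \sim v_2|_A = (w_2|_{c(v)})|_A = w_2|_A \preceq w_2$, so $w_2|_A \in [u]$ is a subword of $w_2 \in [w]$ and hence $[u] \preceq [w]$. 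The expected main obstacle is precisely the interplay between commutation and restriction packaged in the lemma, together with the uniqueness step above; once these are in place, the rest is formal. Finally, grading by word length together with the fact that every lower interval $[\hat 0,[w]]$ is Boolean shows that each open interval $(\hat 0,[w])$ is the barycentric subdivision of the boundary of a simplex, hence a sphere of dimension $|c(w)|-2$. By the standard correspondence between such posets and regular CW complexes, $\Gamma/G(\Delta)$ is a regular CW complex whose closed-cell face posets are the Boolean lattices $2^{c(w)}$, i.e.\ a Boolean cell complex.
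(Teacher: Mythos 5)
Your proposal is correct and follows essentially the same route as the paper: your ``restriction lemma'' is precisely the paper's key observation that a commutation sequence from $w$ to $w'$, when restricted to the letters of $c(v_1)$, carries $v_1$ to $v_2$, and your order isomorphism of each lower interval $\{[v] : [v] \preceq [w]\}$ with the Boolean lattice $2^{c(w)}$ is the same argument via surjectivity plus this injectivity step. The only difference is that you spell out details the paper leaves implicit --- the induction and case analysis behind the restriction lemma, the verification that $\preceq$ is a partial order, and the appeal to the standard CW-poset characterization --- which is added thoroughness rather than a different approach.
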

  \begin{proof}
    Clearly, if two words are in the same equivalence class, then their 
    contents must coincide.
    Also $[v] \preceq [w]$ implies that the content of $v$ is a subset of
    the content of $w$. These facts show that for a word $w$ of length
    $r$ there is a surjective poset map from $\{ [v]~|~[v] \preceq [w] \}$ 
    to the Boolean lattice of subsets of an $r$-element set. In order to
    show that it is an isomorphism we need to see that if $v_1$ and $v_2$ are
    words with the same content and $[v_1], [v_2] \preceq [w]$
    then $[v_1] = [v_2]$. We may assume that $v_1 \preceq w$ and $v_2
    \preceq w'$ 
    for some $w' \in [w]$. Since $w$ and $w'$ are equivalent, there is a
    sequence of commutations that leads from $w$ to $w'$. The
    commutations that involve only letters from $c(v_1)$ can then be used
    to move from $v_1$ to $v_2$. In particular, $[v_1] = [v_2]$. 
  \end{proof}

  The following three theorems are our main results. Their proofs are
  provided in the subsequent sections.
  The concepts from topological combinatorics used to formulate the 
  theorems are introduced in the corresponding section. For further
  reference we refer to the survey article by Bj\"orner \cite{Bjorner95} and
  for particular information about sequential Cohen-Macaulayness (CM)
  to Bj\"orner \etal\ \cite{BjornerWachsWelker07}.

  \begin{theorem}
    Let $\Delta$ be a shellable simplicial complex on the vertex set
    $S$.
    \begin{itemize}
    \item[(i)]
      Let $P = (S,\leq_P)$ be a partial order on $S$. Then the Boolean
      cell complex $\Gamma(\Delta,P)$ is shellable.
    \item[(ii)]
      Let $G = (S,E)$ be a simple graph on $S$. Then the Boolean
      cell complex $\Gamma/G(\Delta)$ is shellable.
    \end{itemize}
    \label{shell-thm}
  \end{theorem}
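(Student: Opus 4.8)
My plan is to reduce both parts to a single restriction-map computation, exploiting that by the preceding Lemma every closed facet of $\Gamma(\Delta,P)$ (resp. $\Gamma/G(\Delta)$) has a Boolean lattice as its interval of faces. First I would identify the facets: arguing as for the claim that a maximal word has maximal content (insert the missing letters of a larger face of $\Delta$ into the word at a position compatible with $P$, which is always possible), a maximal cell of $\Gamma(\Delta,P)$ is a pair $(F,\ell)$ with $F$ a facet of $\Delta$ and $\ell$ a linear extension of $P|_F$, and a maximal cell of $\Gamma/G(\Delta)$ is a class $[\ell]$ with content $c(\ell)=F$ a facet of $\Delta$. By the preceding Lemma the interval below such a facet is the Boolean lattice $2^{F}$, so the closed cell is combinatorially a simplex on vertex set $F$ and its boundary is $\partial\Delta^{|F|-1}$. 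Since \emph{every} ordering of the facets of a simplex boundary is a shelling, a union of some of its facets is an initial segment of such a shelling exactly when it is pure of codimension one. An elementary argument in the Boolean lattice $2^{F}$ shows that the set of faces of $C_j$ which are \emph{new} (not a face of any earlier facet) is always an up-set, and that its complement is pure of codimension one if and only if this up-set is a \emph{principal} filter $[R(C_j),c(C_j)]$. Thus the shelling condition at $C_j$ is equivalent to exhibiting a well-defined restriction set $R(C_j)$, and the whole problem becomes the construction of an ordering together with such an $R$.

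For the ordering, fix a shelling $F_1,\dots,F_m$ of $\Delta$ with restriction sets $R_\Delta(F_b)$, and fix a total order $\prec$ on $S$ that is a linear extension of $P$ (in case (ii) any fixed total order). Order the facets first by the position of their content in the $\Delta$-shelling, and within a fixed content $F_b$ lexicographically by the word $\ell$ in case (i), and by the $\prec$-least representative of $[\ell]$ in case (ii). Now consider a subword $v\preceq\ell$ with content $A=c(v)\subseteq F_b$. Because any $P$-compatible word on $A$ extends to a $P|_{F_a}$-linear extension whenever $A\subseteq F_a$, and because all facets of earlier content precede $C_j$, the face $v$ occurs in an earlier-content facet precisely when $A\subseteq F_a$ for some $a<b$, that is, precisely when $A\not\supseteq R_\Delta(F_b)$. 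Hence $v$ can be new only when $A\supseteq R_\Delta(F_b)$, and for such $v$ its newness is decided purely within the content $F_b$, against the lexicographically smaller decorations. Writing $R_{\mathrm{fib}}(\ell)$ for this within-content restriction set, the new faces are exactly the subwords whose content contains both $R_\Delta(F_b)$ and $R_{\mathrm{fib}}(\ell)$; as an intersection of two principal filters of $2^{F_b}$ this is the principal filter with bottom $R(C_j)=R_\Delta(F_b)\cup R_{\mathrm{fib}}(\ell)$. This factorisation is exactly what the first paragraph requires, and it is insensitive to purity, so it applies to the nonpure Bj\"orner--Wachs notion of shellability as well.

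The one substantial point is the \emph{fiber lemma}: that $R_{\mathrm{fib}}(\ell)$ exists, i.e.\ that within a fixed content the new subwords form a principal filter and not merely an up-set. That they form an up-set is immediate, since a subword of a lexicographically smaller decoration is again lexicographically dominated; the content is to show there is a unique minimal new subword. In case (i) this is the statement that lexicographic order shells the complex of injective words that are linear extensions of $P|_{F_b}$, which for $P$ an antichain is precisely the Bj\"orner--Wachs shelling of $\Gamma_{|F_b|-1}$; I would prove it in general by writing $R_{\mathrm{fib}}(\ell)$ explicitly via a descent-type restriction map on $\ell$ and checking directly that a subword is new exactly when its content contains this set. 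In case (ii) the analogue for $\prec$-least class representatives must be proved, and here the preceding Lemma is used once more to guarantee that deleting a letter yields a well-defined codimension-one class, so that the boundary is still a simplex boundary and the same filter criterion applies, while the restriction map must be rendered compatible with the commutation relations of $G$. Verifying that these explicit restriction maps genuinely cut out principal filters — equivalently, that the old part is pure of codimension one at every step — is the heart of the argument and the only place where the combinatorics of $P$ (resp.\ $G$) really enters; the reduction of the first two paragraphs then upgrades each fiber statement to the full theorem.
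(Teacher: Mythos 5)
Your reduction framework is sound, and it is in fact close in spirit to what the paper does for part~(ii): you order the maximal cells by a shelling $[\sigma_1,\tau_1],\dots,[\sigma_r,\tau_r]$ of $\Delta$ applied to contents, handle the within-content block separately, and observe that the restriction set factors as $R(C_j)=R_\Delta(F_b)\cup R_{\mathrm{fib}}(\ell)$; the identification of the maximal cells and the claim that ``old by content'' is governed by $R_\Delta(F_b)$ are correct (via Lemma~\ref{extend-lem}). The genuine gap is that the \emph{fiber lemma} you defer is not a routine verification to be left as a ``direct check'': it is precisely Theorem~\ref{shell-thm} for $\Delta$ a simplex, i.e.\ the entire mathematical content of the theorem. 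The paper supplies exactly this missing content, by different means in each case: for (i) an induction on the partial order $P$, choosing an incomparable pair $a,b$ and splitting $\Gamma=\Gamma(\Delta,P)$ as $\fdel_\Gamma(ab)=\Gamma(\Delta,P+ba)$ and $\Star_\Gamma(ab)=\Gamma(\Star_\Delta(ab),P+ab)$ (Lemma~\ref{star-lem}), glued by Lemma~\ref{shell-lem}; for (ii) an encoding of commutation classes as acyclic orientations (Lemma~\ref{acyclic-lemma}) and a shelling built from the functions $\delta_D$ (Claims 1--3). Your proposal contains neither of these ideas nor any concrete substitute.

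To see that the deferred step is delicate rather than mechanical, note that the lexicographic ordering can genuinely fail if the reference order $\prec$ is not chosen correctly. Take $S=\{1,2,3,4\}$, let $P$ have the single relation $4<_P 1$, and take $\prec$ to be the natural order (which is \emph{not} a linear extension of $P$). In the resulting lex order on linear extensions, the facet $4123$ is preceded by $2341,2413,2431,3241,3412,3421$, and its new faces are $123$, $423$ and $4123$: the subword $123$ occurs in no other linear extension of $P$, and $423$ occurs only in $4123,4213,4231$. These new faces have \emph{two} minimal elements, so they form no interval and the order is not a shelling. Thus your requirement that $\prec$ extend $P$ is essential, and the assertion that lex order then works (with a ``descent-type'' restriction set) needs an actual proof; Bj\"orner--Wachs gives it only for $P$ an antichain. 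The situation in case (ii) is worse: you propose no concrete restriction map at all for commutation classes ordered by $\prec$-least representatives, and it is not evident that this ordering shells $\Gamma/G(2^F)$ -- this is exactly the point where the paper had to invent the $\delta_D$ machinery. As it stands, the proposal proves the theorem only modulo its hardest special case.
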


  Using the preceding theorem for $\Delta$ being a simplex and poset fiber
  theorems from \cite{BjornerWachsWelker05}, we derive our second main result.
  \begin{theorem}
    Let $\Delta$ be a 
    sequentially homotopy CM 
    (resp\mbox{.} sequentially CM over $\field$)
    simplicial complex on the vertex set $S$.  
    \begin{itemize}
    \item[(i)] If $P = (S, \leq_P)$ is a partial order on $S$, then
      the Boolean cell complex $\Gamma(\Delta,P)$ is 
      sequentially homotopy CM 
      (resp\mbox{.} sequentially CM over $\field$).
      In particular, if $\Delta$ is 
      homotopy CM 
      (resp\mbox{.} CM over $\field$),
      then so is $\Gamma(\Delta,P)$.
    \item[(ii)] If $G = (S,E)$ is a graph on vertex set $S$, 
      then the Boolean cell complex $\Gamma/G(\Delta)$ is 
      sequentially homotopy CM 
      (resp\mbox{.} sequentially CM over $\field$).
      In particular, if $\Delta$ is 
      homotopy CM 
      (resp\mbox{.} CM over $\field$),
      then so is $\Gamma/G(\Delta)$. 
    \end{itemize}
    \label{cm-thm}
  \end{theorem}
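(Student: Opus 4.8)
The plan is to present each of $\Gamma(\Delta,P)$ and $\Gamma/G(\Delta)$ as the total complex of a poset map over $\Delta$ whose fibers are the analogous complexes over a simplex, and then to feed this into the poset fiber theorems of \cite{BjornerWachsWelker05}. For part (i) I would consider the content map $c \colon \Gamma(\Delta,P) \to \Delta$ sending a word $w$ to the face $c(w)$, viewing both sides as face posets. Because $v \preceq w$ forces $c(v) \subseteq c(w)$, the map $c$ is order preserving. For a face $A \in \Delta$ its lower fiber $c^{-1}(\Delta_{\le A}) = \{\, w \in \Gamma(\Delta,P) : c(w) \subseteq A \,\}$ is exactly $\Gamma(2^A,P)$, the complex of $P$-respecting injective words on the full simplex $2^A$ on $A$ (every subset of $A$ being a face of $\Delta$).

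Next I would analyze these fibers. By Theorem \ref{shell-thm}(i), applied with the trivially shellable simplex $2^A$ in place of $\Delta$, each fiber $\Gamma(2^A,P)$ is shellable. I would then check that it is in fact \emph{pure} of dimension $|A|-1$: if a word $w$ has content $B \subsetneq A$, then the total order on $B$ recorded by $w$ refines $P|_B$ and hence extends to a linear extension of $P|_A$, so $w$ can be lengthened by inserting a letter of $A \setminus B$ in an order-compatible position. Thus every maximal word is a linear extension of $P|_A$ of length $|A|$. Since a pure shellable Boolean cell complex is homotopy CM and CM over $\field$ (cf\mbox{.} \cite{Bjorner95}), each fiber is homotopy CM (resp\mbox{.} CM over $\field$) of dimension $|A|-1$, matching $\dim \Delta(\Delta_{\le A})$.

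With the face poset of $\Delta$ sequentially homotopy CM (resp\mbox{.} sequentially CM over $\field$) by hypothesis and every fiber homotopy CM (resp\mbox{.} CM over $\field$) of the correct dimension, the corresponding poset fiber theorem of \cite{BjornerWachsWelker05} gives that $\Gamma(\Delta,P)$ is sequentially homotopy CM (resp\mbox{.} sequentially CM over $\field$). For the ``in particular'' clause, I would note that when $\Delta$ is pure every maximal word has content a facet of $\Delta$, so $\Gamma(\Delta,P)$ is itself pure and the nonsequential fiber theorem applies directly. Part (ii) runs identically through the content map $\Gamma/G(\Delta) \to \Delta$, $[w] \mapsto c(w)$, which is well defined because equivalent words share a content; its fiber over $A$ is $\Gamma/G(2^A)$, shellable by Theorem \ref{shell-thm}(ii) and pure of dimension $|A|-1$ since a word of content $B \subsetneq A$ can always be extended by appending an unused letter.

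The hard part is precisely pinning down the fibers. Establishing their purity and the passage from shellability to homotopy-CM-ness of the \emph{full} dimension $|A|-1$ is what makes the dimension (connectivity) hypotheses of the fiber theorem hold; dropping purity would leave only the sequential conclusions. The remaining bookkeeping—checking that the order complex of the fiber and of $\Delta_{\le A}$ have equal dimension, and that CM-ness of a Boolean cell complex is read off its barycentric subdivision—is routine, so the genuine content lies in the fiber identification together with the cited fiber theorems.
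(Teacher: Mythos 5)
Your proposal is correct and follows essentially the same route as the paper: the content map $\Gamma(\Delta,P)\to\Delta$ (resp. $\Gamma/G(\Delta)\to\Delta$) as a surjective, rank-preserving poset map, identification of the fibers over $\Delta_{\le A}$ with $\Gamma(2^A,P)$ (resp. $\Gamma/G|_A(2^A)$), shellability of these fibers via Theorem~\ref{shell-thm}, hence homotopy-CM/CM over $\field$, and then the poset fiber theorems of \cite{BjornerWachsWelker05}. Your explicit purity check for the fibers and the purity argument for the ``in particular'' clause are only minor elaborations of points the paper treats as well known (pure shellable implies \HCM, and \SHCM{} plus pure equals \HCM).
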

 
  Our third result exhibits a general property of the complexes
  $\Gamma(\Delta,P)$ in the case that $P$ is the antichain.

  \begin{theorem}
    Let $\Delta$ be a simplicial complex on vertex set $S$. 
    Then the complex $\Gamma(\Delta)$ of injective words derived from 
    $\Delta$ is partitionable.
    \label{partition-thm}
  \end{theorem}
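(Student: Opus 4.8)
The plan is to reduce the statement to a purely local assertion about a single simplex and then to settle that by induction. Throughout, recall that a word $w$ is a maximal cell (a facet) of $\Gamma(\Delta)$ exactly when its content $c(w)$ is a facet of $\Delta$, since any word whose content is not maximal in $\Delta$ can be lengthened by one letter; call these the \emph{facet-words}. A partition of the Boolean cell complex $\Gamma(\Delta)$ is a decomposition of its faces into intervals $[G,F]$ of the face poset with $F$ a facet-word, and each such interval is automatically a Boolean interval inside the cell $F$. So the goal is to distribute all injective words with content in $\Delta$ into disjoint such intervals.

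First I would linearly order the facets $F_1,\dots,F_N$ of $\Delta$ and assign to each word $u$ the index $\mu(u)=\min\{\,i : c(u)\subseteq F_i\,\}$. This sorts the words of $\Gamma(\Delta)$ into blocks, the $i$-th block consisting of the words on $F_i$ whose content lies in $U_i := 2^{F_i}\setminus\bigcup_{j<i}2^{F_i\cap F_j}$. Since the $F_j$ are pairwise incomparable we have $F_i\in U_i$, and $U_i$ is upward closed in $2^{F_i}$ because it is the complement of a subcomplex. The assignment $\mu$ is monotone: $c(u)\subseteq c(u')$ forces $\mu(u)\le\mu(u')$. Consequently, if $w$ is a full-length word on $F_i$ (content $F_i$) and $R\preceq w$ is a subword with $c(R)\in U_i$, then every word in the interval $[R,w]$ has content in $[\,c(R),F_i\,]\subseteq U_i$, so the whole interval lies in block $i$. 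Hence it suffices to partition each block separately into intervals topped by full-length words on $F_i$; these tops are facet-words, and the union over $i$ is the required partition of $\Gamma(\Delta)$.

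This reduces everything to the following local claim: for a finite set $F$ and an up-set $U\subseteq 2^F$ with $F\in U$, the injective words with content in $U$ can be partitioned into Boolean intervals topped by the full-length words on $F$. I would prove this by induction on $|F|$. Let $z=\max F$ for a fixed order on $F$, put $F'=F\setminus\{z\}$, and set $U''=\{H\subseteq F' : H\cup\{z\}\in U\}$, an up-set of $2^{F'}$ containing $F'$. By induction the words on $F'$ with content in $U''$ partition into intervals $[R(\sigma),\sigma]$ indexed by the full-length words $\sigma$ on $F'$. Deleting the letter $z$ (when present) sends every word with content in $U$ to a word on $F'$ with content in $U''$, hence into a unique interval $[R(\sigma),\sigma]$, so the words split into groups according to which $\sigma$ they reduce to. Within the group of a fixed $\sigma$ I would reinsert $z$: the candidate tops are the $|F|$ full-length words $\sigma^{(p)}$ obtained by placing $z$ in gap $p$ of $\sigma$, and the task is to distribute the various positions of $z$, together with the $z$-free words, among these tops so as to produce disjoint Boolean intervals.

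The hard part is exactly this reinsertion of $z$. A word that contains $z$ but omits some of the optional letters of $\sigma$ does not by itself reveal which gap of $\sigma$ the letter $z$ occupies, so one must impose a canonical disambiguation---for instance, assign $z$ to the gap immediately following its nearest present predecessor among the letters of $\sigma$, declaring that predecessor mandatory in the corresponding interval---and one must route the $z$-free words, which are admissible precisely when their content already lies in $U$, into a single designated top $\sigma^{(p)}$ in which $z$ is optional. Verifying that these prescriptions yield pairwise disjoint Boolean intervals whose tops exhaust all full-length words on $F$ is the technical heart of the argument; it parallels the Bj\"orner--Wachs analysis of the full complex of injective words \cite{BjornerWachs}, to which the local claim specializes when $U=2^F$.
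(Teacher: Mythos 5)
Your opening reduction is sound: ordering the facets and sorting words by the first facet containing their content does split $\Gamma(\Delta)$ into blocks, the $i$-th block consisting of the words on $F_i$ with content in the up-set $U_i = 2^{F_i}\setminus\bigcup_{j<i}2^{F_i\cap F_j}$, and partitions of the blocks would indeed assemble into the required partition. The genuine gap is in the proof of your local claim, exactly at the step you yourself flag as ``the technical heart'' and leave unverified --- and in fact that step cannot be carried out as described, because the grouping by $z$-deletion is incompatible with partitionability. Concretely, take $F=\{1,2,3\}$, $z=3$, and $U = 2^{F}\setminus\bigl(2^{\{1\}}\cup 2^{\{2\}}\bigr)$; this up-set arises from your own reduction, e.g.\ from $\Delta$ with facets $F_1=\{1,4\}$, $F_2=\{2,5\}$, $F_3=\{1,2,3\}$ in that order. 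Here $U''=2^{\{1,2\}}$, so the inductive partition of the words on $F'=\{1,2\}$ is, up to swapping the roles of $12$ and $21$, forced to be $[\emptyset,12]\cup[21,21]$. The group of $\sigma=12$ then consists of the nine words $12,\,3,\,31,\,13,\,32,\,23,\,312,\,132,\,123$ (the words $\emptyset,1,2$ are excluded since their contents are not in $U$). Its only full-length words are $312,132,123$, so any partition of the group into Boolean intervals topped by full-length words has exactly three intervals, topped by these, with sizes that are powers of two summing to $9$; since $\emptyset$ is not in the group an interval of size $8$ is impossible, so the sizes must be $4,4,1$. But a size-four interval topped by a permutation has a one-letter bottom lying in the group, and the only one-letter word in the group is $3$; thus the only candidates are $[3,312]$, $[3,132]$, $[3,123]$, any two of which meet in the word $3$. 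Hence the group admits no partition at all, and the same obstruction occurs for the other choice of inductive partition. No ``canonical disambiguation'' rule can repair this.

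Note that this does not refute your local claim itself: in the example above, the full set of thirteen words with content in $U$ is partitionable, e.g.\ by $[3,321]$, $[12,123]$, $[21,213]$, $[13,132]$, $[23,231]$, $[312,312]$. But this working partition necessarily mixes your groups --- the interval $[3,321]=\{3,31,32,321\}$ contains words whose $z$-deletions $\emptyset,1,2,21$ lie in both inductive intervals --- so any correct argument must abandon the idea that the intervals respect the fibers of the deletion map. For comparison, the paper avoids relative complexes entirely: it fixes a linear order on the vertices, assigns to each word $w$ the facet $f(w)=c(w)\cup f_0(w)$, where $f_0(w)$ is the minimal face of $\link_\Delta(c(w))$ in a lexicographic-type order, and groups words by the shortest prefix $v$ with $f(v)=f(w)$. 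Each class is then a translate $v\cdot\Gamma'$ of the full complex of injective words on $f_0(v)$, and partitionability of full injective-word complexes (via shellability, Theorem \ref{shell-thm}) finishes the proof. If you wish to salvage your facet-by-facet strategy, you need a genuinely different argument for the relative local claim; the deletion--reinsertion induction cannot supply it.
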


  \section{Auxiliary lemmas}

  In this section we list some lemmas that give more insight into the
  structure of the complexes $\Gamma(\Delta,P)$ and $\Gamma/G(\Delta)$
  and also serve as
  ingredients for the proofs in later sections.

  For a partial order $P = (S,\leq_P)$
  and a set $B \subseteq S$, let $P|_B$ be the induced partial order
  on $B$. For a linear extension $w = a_1a_2\cdots a_r$ of $P|_{\{a_1,
  a_2, \ldots, a_r\}}$, $P +
  w$ denotes the partial order obtained 
  from $P$ by adding the relations $a_i<a_{i+1}$ for $1 \le i \le r-1$
  and taking the transitive closure of the resulting set of
  relations. For example, if $a, b$ are incomparable in $P$ then $P + ab$ 
  is the partial order obtained from
  $P$ by adding the relation $c<d$ for all pairs $(c,d)$ satisfying $c
  \le_P a$ and $b \le_P d$.

  For a Boolean cell complex $\Gamma$ and a face $w \in
  \Gamma$, let $\fdel_\Gamma(w)$ denote the complex obtained
  by removing the face $w$ and all faces containing $w$. 
  Let $\Star_\Gamma(w)$ be the complex consisting of all 
  faces $w'$ of $\Gamma$ such that some face of $\Gamma$ contains
  both $w$ and $w'$. 
  We call $\fdel_\Gamma(w)$ the \emph{deletion} of $w$ in $\Gamma$ and
  $\Star_\Gamma(w)$ the \emph{star} of $w$ in $\Gamma$. 

  If $\Delta$ and $\Gamma$ are simplicial complexes on
  disjoint ground sets, then the \emph{join} $\Delta * \Gamma$ is the
  simplicial complex 
  $$
  \Delta * \Gamma := 
  \{ \sigma \cup \tau:\sigma \in \Delta, \tau \in \Gamma \}.
  $$
  Still restricting to simplicial complexes, we define
  the \emph{link} of $\tau$ in 
  $\Gamma$ to be the simplicial complex 
  $$
  \link_\Gamma(\tau) := \{\sigma:\sigma \cup \tau  
  \in \Gamma, \sigma \cap \tau = \emptyset\}.
  $$
  Clearly,
  for a simplicial complex $\Gamma$ we have that
  $\Star_\Gamma(\tau) = 2^\tau * \link_\Gamma(\tau)$.

  \begin{lemma}
    Let $P = (S,\leq_P)$ be a partial order on $S$ and let 
    $\sigma$ and $\tau$ be subsets of $S$. If $w$ is a linear
    extension of $P|_\sigma$, then there exists a linear extension $w'$
    of $P|_{\sigma \cup \tau}$ containing $w$ as a subword.
    \label{extend-lem}
  \end{lemma}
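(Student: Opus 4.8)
The plan is to realize the desired word $w'$ as a linear extension of a suitably enlarged partial order on $\sigma \cup \tau$. Write $w = a_1 a_2 \cdots a_r$ and form
$$
Q := \bigl(P|_{\sigma \cup \tau}\bigr) + w,
$$
the partial order obtained from $P|_{\sigma \cup \tau}$ by adjoining the relations $a_i <_Q a_{i+1}$ for $1 \le i \le r-1$ and closing transitively, exactly as in the definition of $P+w$ recalled above. This operation is legitimate because $w$ is a linear extension of $(P|_{\sigma \cup \tau})|_\sigma = P|_\sigma$. Once $Q$ is known to be a genuine partial order, I would simply choose any linear extension $w'$ of $Q$, which exists since $\sigma \cup \tau$ is finite.

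Granting that, the two required properties come for free. First, since $Q$ contains every relation of $P|_{\sigma \cup \tau}$, any linear extension of $Q$ is in particular a linear extension of $P|_{\sigma \cup \tau}$; hence $w'$ is of the required type. Second, the adjoined relations yield $a_1 <_Q a_2 <_Q \cdots <_Q a_r$, so in $w'$ the letters of $\sigma$ must occur in precisely this order; reading off the subword of $w'$ supported on $\sigma$ therefore returns $a_1 \cdots a_r = w$, that is, $w \preceq w'$.

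The only real content, and the step I expect to be the main obstacle, is verifying that $Q$ is antisymmetric, i.e. that taking the transitive closure introduces no cycle. Since $P|_{\sigma \cup \tau}$ and the chain relations $a_i <_Q a_{i+1}$ are each individually acyclic, a cycle would have to alternate between the two families. The key point is that such a cycle cannot close up: whenever a $P$-path runs between two letters $a_j, a_k \in \sigma$, transitivity of $P$ gives $a_j \leq_P a_k$, and because $w$ is a linear extension of $P|_\sigma$ this forces $j \le k$, while the added relations only ever move forward along $w$. Thus the ``$w$-position'' can never strictly decrease around a closed walk, which rules out any cycle. The mildly delicate part is that a $P$-path may pass through letters of $\tau \setminus \sigma$ carrying no $w$-position, so I would phrase the argument in terms of the positions of the $\sigma$-letters at which the walk enters and leaves each $P$-segment, using that $a <_P t <_P b$ with $a,b \in \sigma$ already forces $a <_P b$ in $P$.

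As an alternative avoiding the transitive-closure bookkeeping, one can induct on $|\tau \setminus \sigma|$, inserting a single letter $t$ at a time: the sets $\{a \in \sigma : a <_P t\}$ and $\{b \in \sigma : t <_P b\}$ satisfy that every element of the former precedes every element of the latter in $w$ (again because $a <_P t <_P b$ gives $a <_P b$), so there is a legal slot in which to insert $t$ and obtain a linear extension of $P|_{\sigma \cup \{t\}}$ still containing $w$ as a subword. This is the same combinatorial heart as the acyclicity check, and either route closes the argument.
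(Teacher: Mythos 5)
Your proposal is correct, and your primary route is genuinely different from the paper's. The paper proves the lemma by exactly the insertion argument you relegate to your last paragraph: it reduces by induction to $\tau = \{x\}$ and inserts $x$ either at the end (if $x$ is maximal in $P|_{\sigma + x}$) or immediately before the leftmost letter $y$ of $w$ with $x <_P y$, then checks that every $z <_P x$ satisfies $z <_P y$ and hence lies left of the insertion point --- the same ``legal slot'' observation you make with your two sets $\{a \in \sigma : a <_P t\}$ and $\{b \in \sigma : t <_P b\}$. Your main argument instead works globally: form $Q = (P|_{\sigma \cup \tau}) + w$ and take any linear extension of $Q$. The real content, as you correctly identify, is antisymmetry of the transitive closure, and your position-monotonicity argument is sound: a cycle would decompose into maximal $P$-segments alternating with segments of $w$-relations; every segment boundary lies in $\sigma$; a $P$-segment between $\sigma$-letters $a_j$ and $a_k$ yields $a_j <_P a_k$ by transitivity of $P$ (regardless of intermediate letters from $\tau \setminus \sigma$), hence $j < k$ because $w$ extends $P|_\sigma$; and $w$-segments also strictly increase position --- so position strictly increases around a closed walk, a contradiction. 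What your route buys: it makes explicit and proves a fact the paper tacitly assumes in its very definition of $P + w$, namely that adjoining the chain of $w$ to a compatible partial order and closing transitively really yields a partial order; the paper never verifies this. What the paper's route buys: it is more elementary and constructive, producing $w'$ by concrete insertions and avoiding the transitive-closure bookkeeping altogether, which is presumably why the authors chose it.
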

  \begin{proof}
  By a simple induction argument, it suffices to consider the case
  $\tau = \{x\}$, where $x \in S \setminus \sigma$.
  If $x$ is maximal in $P|_{\tau +x}$, then we may choose $x$ to be the
  maximal element in $w'$. Otherwise, let $y$ be the leftmost element
  in $w$ such that $x <_P y$. Let $w'$ be the word
  obtained from $w$ by inserting $x$ just before $y$. 
  Then $w'$ is a linear extension of $P|_{\sigma+x}$. 
  Namely, if $z <_P x$, then $z <_P y$, which implies that $z$ appears
  before $y$ and hence before $x$ in $w'$.
  \end{proof}

  \begin{lemma}
    Let $\Delta$ be a simplicial complex on the vertex set $S$ and let
    $P = (S, \leq_P)$ be a partial order on $S$. If $w \in
    \Gamma(\Delta,P)$, then 
    $$
    \Gamma(\Star_{\Delta}(c(w)),P+w) = 
    \Star_{\Gamma(\Delta,P)}(w)
    $$
    \label{star-lem}
  \end{lemma}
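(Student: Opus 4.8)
The plan is to prove the stated equality of Boolean cell complexes by establishing the two inclusions separately, after unwinding the definitions of both sides. A word $v$ lies in the right-hand side $\Star_{\Gamma(\Delta,P)}(w)$ precisely when $v \in \Gamma(\Delta,P)$ and there exists $u \in \Gamma(\Delta,P)$ with $w \preceq u$ and $v \preceq u$. A word $v$ lies in the left-hand side $\Gamma(\Star_\Delta(c(w)),P+w)$ precisely when $c(v) \cup c(w) \in \Delta$ (equivalently $c(v) \in \Star_\Delta(c(w))$) and $v$ is a linear extension of $(P+w)|_{c(v)}$. The central observation I would isolate first is that, by definition of $P+w$, the word $w$ is a \emph{chain} in $P+w$: its letters satisfy $a_1 <_{P+w} a_2 <_{P+w} \cdots$. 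Hence in any linear extension of $(P+w)|_B$ with $c(w) \subseteq B$, the letters of $c(w)$ occur in exactly the order prescribed by $w$.

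For the inclusion $\Star_{\Gamma(\Delta,P)}(w) \subseteq \Gamma(\Star_\Delta(c(w)),P+w)$, I would take a witnessing $u \in \Gamma(\Delta,P)$ with $w \preceq u$ and $v \preceq u$. Since $c(v) \cup c(w) \subseteq c(u) \in \Delta$, the set $c(v) \cup c(w)$ is a face of $\Delta$, giving $c(v) \in \Star_\Delta(c(w))$. It then remains to check that $v$ is a linear extension of $(P+w)|_{c(v)}$. I would argue this by first showing that $u$ itself is a linear extension of $(P+w)|_{c(u)}$: the order of $u$ respects $P$ because $u \in \Gamma(\Delta,P)$, and it respects the chain relations of $w$ because $w \preceq u$; hence it respects every generating relation of $P+w$ on $c(u)$, and therefore the whole order $(P+w)|_{c(u)}$. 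Restricting to the subword $v \preceq u$ then shows $v$ is a linear extension of $(P+w)|_{c(v)}$.

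For the reverse inclusion, I would start from $v \in \Gamma(\Star_\Delta(c(w)),P+w)$ and produce the required witness $u$ by invoking Lemma~\ref{extend-lem} applied to the partial order $P+w$, with $\sigma = c(v)$ and $\tau = c(w) \setminus c(v)$: since $v$ is a linear extension of $(P+w)|_{c(v)}$, there is a linear extension $u$ of $(P+w)|_{c(v) \cup c(w)}$ containing $v$ as a subword. I would then verify the two needed properties of $u$. First, $u \in \Gamma(\Delta,P)$, because $c(u) = c(v) \cup c(w) \in \Delta$ and $u$ respects $(P+w)|_{c(u)} \supseteq P|_{c(u)}$. Second, $w \preceq u$, which is exactly where the opening observation is used: since $w$ is a chain in $P+w$, the letters of $c(w)$ appear in $u$ in the order of $w$, so $w$ is the subword of $u$ on content $c(w)$. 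Together these give $v \in \Star_{\Gamma(\Delta,P)}(w)$. I expect the only genuinely delicate point to be this last step, establishing $w \preceq u$: one must use that $w$ is totally ordered in $P+w$ (not merely that its content lies in $c(u)$) to conclude that the linear extension $u$ reproduces $w$ as a subword, rather than some other interleaving of its letters.
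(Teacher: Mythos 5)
Your proof is correct and takes essentially the same route as the paper's: both directions rest on Lemma~\ref{extend-lem} applied to the order $P+w$, together with the observation that $w$ is a chain in $P+w$, so any linear extension of $(P+w)|_B$ with $c(w)\subseteq B$ contains $w$ as a subword. The paper words one inclusion more tersely via the identity $\Star_{\Gamma(\Delta,P+w)}(w)=\Star_{\Gamma(\Delta,P)}(w)$, but this is the same content as your argument that any coface of $w$ in $\Gamma(\Delta,P)$ automatically respects $(P+w)$ restricted to its content.
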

  \begin{proof} A cell $w'$ belongs to the
    left-hand side if and only if $c(w) \cup c(w') \in \Delta$ and
  $w'$ is a linear extension of $(P+w)|_{c(w')}$. By
  Lemma~\ref{extend-lem}, there is then a face 
  $w''$ of $\Gamma(\Delta,P+w)$ containing $w'$ such that
  $c(w'') = c(w) \cup c(w')$. This implies that
  $w' \in \Star_{\Gamma(\Delta,P+w)}(w)$. 
  As a consequence, 
  $$
  \Gamma(\Star_{\Delta}(c(w)),P+w) \subseteq
  \Star_{\Gamma(\Delta,P+w)}(w) =
  \Star_{\Gamma(\Delta,P)}(w).
  $$
  Conversely, 
  $w'$ belongs to the right-hand side if and only if there is a face
  $w''$ of $\Gamma(\Delta,P)$ with content $c(w) \cup c(w')$ such that 
  $w''$ is a linear extension of $(P+w)|_{c(w) \cup c(w')}$.
  This implies that the two families are identical.
  \end{proof}

  \section{Shellable complexes}

  A pure Boolean cell complex is a Boolean cell complex in which all 
  cells that are maximal with respect to inclusion have the same dimension. 
  We define the class of shellable Boolean cell complexes as follows.
  A finite Boolean cell complex $\Gamma$ is called shellable if $\Gamma$
  satisfies one of the following two conditions:      
  \begin{itemize}
    \item[(i)]
      $\Gamma = 2^\Omega$ for some finite set $\Omega$.  
    \item[(ii)]
      $\Gamma$ is pure and there is a 
      cell $\sigma$ in $\Gamma$ that is contained in 
      a unique maximal cell $\tau$ such that
      $\fdel_\Gamma(\sigma)$ is shellable.
  \end{itemize}
  Note that we allow $\Omega = \emptyset$ and $\Gamma = \{\emptyset\}$
  in (i) and $\sigma = \tau$ in (ii). Also in the situation of (ii)
  we have $\fdel_\Gamma(\sigma) = \Gamma \setminus [\sigma,\tau]$, where
  $[\sigma,\tau] = \{\rho: \sigma
  \subseteq \rho \subseteq \tau\}$.
  
  A simple inductive argument yields the following.

  \begin{proposition} \label{shellingorder}
    Let $\Gamma$ be a pure $d$-dimensional Boolean cell complex and
    let $\tau_1, \ldots, \tau_r$ be its inclusion maximal cells.
    Then $\Gamma$ is shellable if and only if 
    there is an ordered partition of $\Gamma$
    into intervals $[\sigma_1,\tau_1]$, $\ldots$, $[\sigma_r,\tau_r]$
    such that for $1 \leq k \leq r-1$ the union $\bigcup_{i=1}^k [\sigma_i,\tau_i]$ is 
    a shellable Boolean cell complex of dimension $d$.
    In particular, $\Gamma$ is homotopy equivalent to a wedge
    of $d$-spheres. The number of spheres is given by 
    $\# \{ j ~|~\sigma_j = \tau_j \}$.
  \end{proposition}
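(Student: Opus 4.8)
The plan is to prove Proposition~\ref{shellingorder} by induction on the number $r$ of maximal cells, using the recursive Definition~(ii) of shellability as the engine that both produces and consumes the claimed interval partition. The two directions of the equivalence will be handled separately, and in each I will arrange the induction so that removing one closed interval $[\sigma_k,\tau_k]$ from $\Gamma$ leaves a pure $d$-dimensional Boolean cell complex with one fewer maximal cell.

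\medskip

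First I would prove the forward direction (shellable $\Rightarrow$ partition exists). Since $\Gamma$ is pure and has $r \geq 1$ maximal cells, the base case of the definition ($\Gamma = 2^\Omega$) corresponds to $r=1$: here $\tau_1 = \Omega$ and I set $\sigma_1 = \emptyset$, giving the trivial partition $[\emptyset,\Omega]$. For the inductive step, shellability via (ii) hands me a cell $\sigma$ contained in a unique maximal cell $\tau$ with $\fdel_\Gamma(\sigma)$ shellable. Relabel so that $\tau = \tau_r$ and set $\sigma_r = \sigma$; then $\fdel_\Gamma(\sigma) = \Gamma \setminus [\sigma_r,\tau_r]$ is a pure $d$-dimensional complex with maximal cells $\tau_1,\ldots,\tau_{r-1}$, so by induction it admits an interval partition $[\sigma_1,\tau_1],\ldots,[\sigma_{r-1},\tau_{r-1}]$ whose initial unions are all shellable of dimension $d$. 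Appending $[\sigma_r,\tau_r]$ gives the desired partition of $\Gamma$, and the required shellability of each initial union $\bigcup_{i=1}^k[\sigma_i,\tau_i]$ for $k \leq r-1$ is exactly the conclusion inherited from $\fdel_\Gamma(\sigma)$. I would remark that the point needing care is the purity of $\fdel_\Gamma(\sigma)$: because $\sigma$ lies in the \emph{unique} maximal cell $\tau$, deleting $[\sigma,\tau]$ removes $\tau$ together with cells that are not maximal in $\Gamma$, so no maximal cell of dimension less than $d$ is created, and purity is preserved.

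\medskip

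For the converse (partition exists $\Rightarrow$ shellable), I would again induct on $r$, this time reading the partition off its \emph{last} interval. If $r=1$ then $\Gamma = [\sigma_1,\tau_1]$; since $\Gamma$ is a Boolean cell complex whose faces are exactly the subsets of $\tau_1$ lying above $\sigma_1$, and purity forces the partition to cover all of $2^{\tau_1}$, this interval must be $2^{\tau_1}$, matching clause~(i). If $r \geq 2$, the hypothesis tells me directly that $\Gamma' := \bigcup_{i=1}^{r-1}[\sigma_i,\tau_i]$ is shellable of dimension $d$; moreover $\Gamma = \Gamma' \cup [\sigma_r,\tau_r]$ with $\Gamma' = \fdel_\Gamma(\sigma_r)$ and $\tau_r$ the unique maximal cell containing $\sigma_r$. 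Clause~(ii) then applies with $\sigma = \sigma_r$, so $\Gamma$ is shellable. The one verification I must not skip is that $\sigma_r$ is contained in a \emph{unique} maximal cell of $\Gamma$: this follows because the intervals partition $\Gamma$ disjointly, so any face above $\sigma_r$ lies in $[\sigma_r,\tau_r]$ and hence is below $\tau_r$.

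\medskip

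Finally, the homotopy consequence follows by tracking the effect of each interval on the homotopy type. Attaching $[\sigma_k,\tau_k]$ to the previously built shellable complex either fills in cells of a boundary already present (when $\sigma_k \neq \tau_k$, so the interval is a ``cone'' contributing no new top-dimensional sphere) or attaches a new $d$-cell along its entire boundary (when $\sigma_k = \tau_k = \tau_k$, a full-dimensional interval whose minimal element is maximal, creating a new $d$-sphere). Carrying out this bookkeeping by induction, the homotopy type is a wedge of $d$-spheres, one for each $k$ with $\sigma_k = \tau_k$; I expect the main obstacle to be stating the attaching-map argument cleanly in the Boolean cell setting rather than the purely simplicial one, since one must observe that the boundary of a $d$-cell in a Boolean cell complex is a $(d-1)$-sphere and that a partial interval $[\sigma_k,\tau_k]$ with $\sigma_k \subsetneq \tau_k$ is contractible and glued along a contractible subcomplex, hence homotopically inert.
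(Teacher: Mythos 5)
Your induction on the number of maximal cells is exactly the ``simple inductive argument'' the paper alludes to (it offers no further detail), and the skeleton is right: clause (ii) of the definition peels off the last interval in the forward direction and reattaches it in the converse, and the homotopy count follows by tracking how each interval is glued on. However, both of the steps that you yourself flag as the critical ones are supported by reasoning that is not valid. In the forward direction you justify purity of $\fdel_\Gamma(\sigma)$ by noting that, apart from $\tau$, only non-maximal cells of $\Gamma$ are removed, ``so no maximal cell of dimension less than $d$ is created.'' That inference is false: deleting non-maximal cells can create new maximal cells. Take the pure complex with facets $123$ and $345$ (two triangles glued at the vertex $3$) and $\sigma = 1$, $\tau = 123$; then $\sigma$ lies in a unique maximal cell and every removed cell other than $\tau$ is non-maximal, yet $\fdel_\Gamma(1)$ has maximal cells $23$ and $345$ and is not pure. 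The correct justification is shorter: clause (ii) requires $\fdel_\Gamma(\sigma)$ to be \emph{shellable}, and shellable complexes are pure by definition; since $\fdel_\Gamma(\sigma)$ contains the $d$-cells $\tau_i$, $i \neq r$ (none contains $\sigma$, by uniqueness of $\tau$), purity forces all its maximal cells to be $d$-dimensional, hence to be exactly $\tau_1, \ldots, \tau_{r-1}$, which is what your induction hypothesis needs.

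In the converse direction you claim that every face above $\sigma_r$ lies in $[\sigma_r,\tau_r]$ ``because the intervals partition $\Gamma$ disjointly.'' Disjointness is not enough. For the two triangles $123$ and $234$ glued along the edge $23$, the intervals $[1,123]$ and $[\emptyset,234]$, taken in this order, are pairwise disjoint, cover the complex, and have the maximal cells as tops, yet the faces above $\sigma_2 = \emptyset$ are certainly not all below $\tau_2 = 234$, and $\emptyset$ lies in both maximal cells. What rescues the step is the part of the hypothesis you never invoke: $\bigcup_{i=1}^{r-1}[\sigma_i,\tau_i]$ is a (shellable) Boolean cell \emph{complex}, hence closed under taking faces. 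If some $\rho \supseteq \sigma_r$ lay in an earlier interval, this closure would force the face $\sigma_r$ of $\rho$ into $\bigcup_{i<r}[\sigma_i,\tau_i]$, contradicting disjointness; hence every face above $\sigma_r$ lies in $[\sigma_r,\tau_r]$, giving both the uniqueness of $\tau_r$ and the identity $\fdel_\Gamma(\sigma_r) = \bigcup_{i<r}[\sigma_i,\tau_i]$ that clause (ii) needs. (In the example above this hypothesis indeed fails: $[1,123]$ is not a complex.) Finally, your homotopy paragraph is fine in outline, but when $\sigma_k = \tau_k$ you should say explicitly that the attaching map $S^{d-1} \to \Gamma_{k-1}$ is null-homotopic because $\Gamma_{k-1}$ is $(d-1)$-connected, which is available by induction since it is already known to be a wedge of $d$-spheres.
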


  We refer to such an ordered partition as described in Proposition
  \ref{shellingorder} as a \emph{shelling order}.

  Indeed the usual definition of a shellable Boolean cell complex $\Gamma$ of
  dimension $d$ postulates that $\Gamma$ is pure and that there is linear
  order $\tau_1, \ldots, \tau_r$ of its $d$-dimensional cells such that
  the intersection of the complex generated by $\tau_1, \ldots, \tau_i$ and 
  the cell $\tau_{i+1}$ is
  shellable of dimension $d-1$ for all $1 \leq i \leq r-1$. It is
  easy to check that the existence of such an ordering is equivalent
  to the existence of a shelling order in our sense. 
 
  \begin{lemma}
    If $\Gamma$ is a pure Boolean cell complex and $\rho$ is a face such
    that $\Gamma^1 = \fdel_\Gamma(\rho)$ and
    $\Gamma^2 = \Star_\Gamma(\rho)$ are shellable of dimension $d$,
    then $\Gamma$ is shellable of dimension $d$.
    \label{shell-lem}
  \end{lemma}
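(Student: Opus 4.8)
The plan is to build a shelling of $\Gamma$ by placing a shelling of the star $\Gamma^2 = \Star_\Gamma(\rho)$ at the end and peeling off the cells of the deletion $\Gamma^1 = \fdel_\Gamma(\rho)$ one facet at a time from the top, using the recursive definition~(ii) together with Proposition~\ref{shellingorder}. First I would record the basic decomposition: every face of $\Gamma$ either contains $\rho$ (hence lies in $\Gamma^2$) or does not (hence lies in $\Gamma^1$), so $\Gamma = \Gamma^1 \cup \Gamma^2$; and since $\Gamma$, $\Gamma^1$, $\Gamma^2$ are all pure of dimension $d$, the facets of $\Gamma$ split into the facets of $\Gamma^2$ (those containing $\rho$) and the facets of $\Gamma^1$ (those not containing $\rho$). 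I would also note that $\Gamma^2$ is a subcomplex, i.e.\ a lower ideal: if $w' \in \Star_\Gamma(\rho)$ and $w'' \preceq w'$ then $w''$ lies in any face witnessing $w' \in \Star_\Gamma(\rho)$, so $w'' \in \Gamma^2$. Consequently, for any facet $\nu$ of $\Gamma^1$ the set $2^\nu \cap \Gamma^2$ is a lower ideal in the Boolean lattice $2^\nu$ of faces of $\nu$, and $\nu \notin \Gamma^2$.

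The induction is on the number of facets of $\Gamma$ not containing $\rho$. If there are none then $\Gamma = \Gamma^2$, which is shellable by hypothesis; this is the base case. For the inductive step I would locate a facet $\nu$ of $\Gamma^1$ and a face $\beta \preceq \nu$ with $\beta \notin \Gamma^2$ such that $\beta$ is contained in a \emph{unique} facet of $\Gamma$ and $\fdel_\Gamma(\beta) = \Gamma \setminus [\beta,\nu]$. Because $\Gamma^2$ is a lower ideal and $\beta \notin \Gamma^2$, the whole interval $[\beta,\nu]$ is disjoint from $\Gamma^2$; hence removing it leaves the star untouched, $\Star_{\fdel_\Gamma(\beta)}(\rho) = \Gamma^2$, which is still shellable, while the deletion becomes $\fdel_{\fdel_\Gamma(\beta)}(\rho) = \Gamma^1 \setminus [\beta,\nu]$. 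If $[\beta,\nu]$ is arranged to be the \emph{last} interval of a shelling of $\Gamma^1$, then $\Gamma^1 \setminus [\beta,\nu]$ is the union of the earlier intervals and is therefore shellable. By the inductive hypothesis $\fdel_\Gamma(\beta)$ is shellable, and since $\beta$ lies in the unique maximal cell $\nu$, the recursive definition~(ii) yields that $\Gamma$ is shellable. Each peel removes exactly one $\Gamma^1$-facet, so the induction terminates at $\Gamma = \Gamma^2$.

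The heart of the argument, and the step I expect to be the main obstacle, is producing the peel, that is, showing there is a shelling of $\Gamma^1$ whose final facet $\nu$ has its $\Gamma$-private faces forming a genuine interval with base outside $\Gamma^2$. The $\Gamma$-private faces of $\nu$ equal $[\beta',\nu] \setminus \Gamma^2$, where $[\beta',\nu]$ is the shelling interval of $\nu$ inside $\Gamma^1$; since $2^\nu \cap \Gamma^2$ is a lower ideal of $2^\nu$, this difference is upward closed in $[\beta',\nu]$, but it need not have a unique minimal element for an arbitrary shelling of $\Gamma^1$ (a small illustrative case is a facet meeting $\Gamma^2$ only in the empty word when it is shelled first). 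The resolution I would pursue is to choose the shelling of $\Gamma^1$ \emph{compatibly} with the intersection $\Gamma^1 \cap \Gamma^2 = \fdel_{\Gamma^2}(\rho)$. Using Lemma~\ref{star-lem} and the cone-type structure of the star, one checks that $\Gamma^1 \cap \Gamma^2$ is pure of dimension $d-1$ and shellable; because it is a subcomplex of strictly smaller dimension than $\Gamma^1$, one can order the facets of $\Gamma^1$ so that at each stage the contact of the new facet with $\Gamma^2$ together with the previously shelled facets is a pure $(d-1)$-dimensional initial segment of the boundary of that facet, which is exactly what forces $[\beta',\nu]\setminus\Gamma^2$ to be a single interval with base not in $\Gamma^2$. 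Verifying that such a compatible shelling always exists, and that the resulting bases indeed lie outside $\Gamma^2$, is the one place where the dimension gap $\dim(\Gamma^1 \cap \Gamma^2) = d-1 < d$ and the lower-ideal property of $\Gamma^2$ must be used in an essential way.
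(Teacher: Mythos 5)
Your setup is fine: the decomposition $\Gamma = \Gamma^1 \cup \Gamma^2$, the observation that $\Gamma^2$ is a lower ideal so that $\beta \notin \Gamma^2$ forces $[\beta,\nu] \cap \Gamma^2 = \emptyset$, and the inductive skeleton (peel an interval, star unchanged, recurse) are all correct. The gap is exactly the step you flag as the ``main obstacle'': your inductive step needs a shelling of $\Gamma^1$ whose \emph{last} interval $[\beta,\nu]$ has base $\beta \notin \Gamma^2$, and you defer its existence to an unproven claim that a shelling of $\Gamma^1$ can be chosen ``compatibly'' with $\Gamma^1 \cap \Gamma^2$. This claim is false, and the inductive step as formulated fails already in the smallest example. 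Take $\Gamma$ to be the boundary of the triangle on $\{a,b,v\}$ and $\rho = \{v\}$. Then $\Gamma^1 = \fdel_\Gamma(\rho) = 2^{\{a,b\}}$ and $\Gamma^2 = \Star_\Gamma(\rho)$ is the path with edges $av$ and $bv$; both are shellable of dimension $d = 1$, and $\Gamma^1 \cap \Gamma^2 = \{\emptyset,a,b\}$ is pure of dimension $0$ and shellable, exactly as your proposed fix requires. But $\Gamma^1$ is a single cell, so its \emph{only} shelling order is the single interval $[\emptyset,ab]$, and $\emptyset \in \Gamma^2$. Hence no shelling of $\Gamma^1$, compatible or otherwise, has the property your induction needs. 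The correct peel here is $[ab,ab]$, which is not an interval of any shelling of $\Gamma^1$ at all; so the identification of the peel intervals of $\Gamma$ with shelling intervals of $\Gamma^1$ --- which is what you use to guarantee that $\Gamma^1 \setminus [\beta,\nu]$ remains shellable --- is broken, not merely unverified.

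The underlying reason is an asymmetry that you half-noticed. Inside the Boolean lattice of faces of a cell, the faces \emph{containing} a fixed face $\rho$ form an interval (any two of them intersect in another one, so there is a unique minimal one), whereas the faces \emph{not lying in} $\Gamma^2$ form the complement of a lower ideal, an upper set which may have several minimal elements. The paper's proof exploits only the good direction: it lists an arbitrary shelling of $\Gamma^1$ \emph{first}, and then appends, for each interval $[\alpha_{2j},\beta_{2j}]$ of an arbitrary shelling of $\Gamma^2$, the truncated interval $[\gamma_{2j},\beta_{2j}]$ where $\gamma_{2j}$ is the unique minimal face of that interval containing $\rho$; the leftover faces $[\alpha_{2j},\beta_{2j}] \setminus [\gamma_{2j},\beta_{2j}]$ do not contain $\rho$ and are therefore automatically already covered by $\Gamma^1$. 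No compatibility between the two shellings is ever needed, which is what makes the lemma a short argument. Your reversed order (star first, deletion peeled afterwards) would require proving that $\Gamma$ admits a shelling in which all facets containing $\rho$ come first; even if that statement is true, it does not follow from your argument and is a genuinely harder relative-shellability assertion than the lemma itself.
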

  \begin{proof}
    Let 
    $$
      [\alpha_{i1},\beta_{i1}], \ldots, [\alpha_{ir_i},\beta_{ir_i}]
    $$
    be a shelling order of $\Gamma^i$ for $i = 1,2$. 
    Note that each $\beta_{2j}$ contains the face $\rho$, because
    each maximal face of a star complex $\Star_\Gamma(\rho)$ contains
    $\rho$. In the interval $[\alpha_{2j},\beta_{2j}]$, 
    there is a unique minimal face $\gamma_{2j}$ containing
    $\rho$. Namely, the intersection of two such faces would again
    lie in the interval and contain $\rho$. We claim that
    the above shelling order on $\Gamma^1$, together with
    $$
      [\gamma_{21},\beta_{21}], \ldots, [\gamma_{2r_2},\beta_{2r_2}],
    $$
    yields a shelling order on $\Gamma = \Gamma^1 \cup \Gamma^2$.
    Namely, the faces in $[\alpha_{2j},\beta_{2j}] \setminus
    [\gamma_{2j},\beta_{2j}]$ are all contained in 
    $\Gamma^1$. In particular, the family obtained by removing 
    $[\gamma_{2j},\beta_{2j}], \ldots, [\gamma_{2r_2},\beta_{2r_2}]$
    is a pure Boolean cell complex for each $j \in [r]$.
    By an induction argument, starting with
    $\Gamma^1$, we hence obtain that $\Gamma^1 \cup \Gamma^2$ is
    shellable.
  \end{proof}

  \begin{proof}[Proof of Theorem {\rm\ref{shell-thm} (i)}]
    If $P$ is a linear order, then $\Gamma := \Gamma(\Delta,P)$ is
    isomorphic to $\Delta$ and hence shellable.
    Otherwise, let $a \in S$ be maximal such that $a$ is incomparable to
    some other element in $S$ (with respect to the order on $P$). Let $b
    \in S \setminus a$ be minimal such that $a$ and $b$ are
    incomparable.

    Note that if $c <_P b$, then $c <_P a$ by minimality of
    $b$. Analogously, if $a <_P c$, then $b <_P c$.
    This implies that if $w$ is a face of $\Gamma$ such that
    $\{a,b\} \not\subseteq c(w)$, then $w$ is a linear extension of 
    $(P + ba)_{c(w)}$.  Namely, by Lemma~\ref{extend-lem}, there is a
    linear extension $w'$ of $P|_{c(w) \cup \{a,b\}}$ such that
    $w$ is a subword of $w'$. Suppose that $a$ appears before $b$ in
    $w'$. Then all elements $c$ between $a$ and $b$ in $w'$ are
    incomparable to $a$ and $b$ with respect to $P$ by the above
    properties. In particular, 
    we may insert $a$ just before $b$ or insert $b$ just after $a$
    and obtain a word with the desired properties.

    As a consequence, we have that 
    $$
      \Gamma(\Delta,P+ba) = \fdel_\Gamma(ab).
    $$
    In particular, if $ab$ is not in $\Delta$, then $\Gamma$
    coincides with $\Gamma(\Delta,P+ba)$. By induction on $P$, we
    obtain that $\Gamma(\Delta,P)$ is shellable in this case.

    If $ab \in \Delta$, then define 
    \begin{eqnarray*}
      \Gamma^1 &=& \Gamma(\Delta,P+ba) = \fdel_\Gamma(ab);\\
      \Gamma^2 &=& \Gamma(\Star_\Delta(ab),P+ab)
      = \Star_\Gamma(ab);
    \end{eqnarray*}
    the very last equality is a consequence of Lemma~\ref{star-lem}.
    Note that $\Gamma = \Gamma^1 \cup \Gamma^2$.
    By induction on $P$, we have that $\Gamma^1$ is shellable. Moreover,
    since
    $$
      \Star_\Delta(ab) = 2^{\{a,b\}} * \link_\Delta(ab)
      \cong \cone^2(\link_\Delta(ab)),
    $$
    $\Star_\Delta(ab)$ is shellable; shellability of simplicial
    complexes is closed under links and cones. Induction on $P$ yields
    that $\Gamma^2$ is shellable.

    Using Lemma~\ref{shell-lem}, we deduce that
    $\Gamma = \Gamma^1 \cup \Gamma^2$ is shellable, which concludes
    the proof.
  \end{proof}

   \begin{example} \label{matroidexample} Let $\Delta = \Delta_M$ be the simplicial complex of
     independent sets of a matroid $M$ of rank $n$. Is is well-known
     that $\Delta_M$ is shellable (see \cite{Bjorner92} for this fact and
     further background on matroids). Thus $\Gamma(\Delta_M)$ is 
     shellable and hence its homology is concentrated in top dimension.
     As a consequence, 
     \begin{eqnarray} \label{newmatroid} 
       \rank_\integers \widetilde{H}_{n-1}(\Gamma(\Delta_M);\integers) & = &
           \sum_{F \in \Delta_M} (-1)^{n-\# F} \#F!.
     \end{eqnarray}
     This clearly is a matroid invariant. If $\Delta_M$ is the full
     simplex, then already Farmer's results \cite{Farmer} show that the
     left-hand side of (\ref{newmatroid}) equals
     the number of fixed point free permutations on $\# M$ letters.
     We have not been able to recognize the numerical value in
     (\ref{newmatroid}) for other matroids $M$.
     Does there exist a `nice' class of
     combinatorial objects counted by this value?
  \end{example}

  A special case of the preceding example also appears in \cite{vanderKallen} and \cite[Lemma 2.1]{Suslin}.
  There the following situation is considered. For finite dimensional vector spaces $V$ and $W$
  let $\Delta_{V,W}$ be the simplicial complex of all collections 
  $\{ (v_0,w_0), \ldots, (v_l,w_l) \}$ of
  pairs $(v_i,w_i) \in V \times W$, $0 \leq i \leq l$, such that
  $v_0, \ldots, v_l$ are linearly independent. One easily checks that
  if $V$ and $W$ are vector spaces over a finite field then $\Delta_{V,W}$ 
  indeed is the set of independent sets of a matroid.
  Now van der Kallen's result \cite{vanderKallen}, which says that the homology of $\Gamma(\Delta_{V,W})$
  is concentrated in top dimension, is a special case of Example \ref{matroidexample}.

  Before we proceed we list a few other appearances of complexes $\Gamma(\Delta)$ in 
  algebraic $K$-theory, even though they do in general not correspond to matroids or
  shellable $\Delta$. 
  In connection with work on Grassmann homology \cite{Gerdes} and in \cite{Suslin}, 
  for a finite dimensional vector space $V$ and 
  a number $s \leq \dim V$ the complex $\Gamma(\Delta_{V,s})$ appears
  for the simplicial complex $\Delta_{V,s}$ of
  all collections $\{ v_0, \ldots, v_l \}$ of vectors from $V$ such that
  any subset of size $\leq s$ is linearly independent.
  Again it is crucial that homology vanishes except for the top degree (see \cite[Lemma 2.2]{Suslin} for
  the case $s = \dim V$). 
  In \cite{Kerz} the same vanishing of the homology of the `classical' complex $\Gamma_n$ is applied.
  Finally, in \cite{MirzaiivanderKallen} several classes of Boolean cell complexes
  are studied. For example, for a given ring $R$ the complex $\Gamma(\Delta_R)$ 
  is studied for the simplicial complex $\Delta_R$ of all subsets $\{ x_0, \ldots, x_r \}$
  of $R$ such that the ideal generated by $x_0, \ldots, x_n$ is $R$.
  Again vanishing of homology in low dimensions is applied in algebraic $K$-theory. 
  All these examples have in common that they emerge in the following way.
  Given a group $G$ one searches for a (chain) complex with free action of $G$. 
  This is achieved by considering the cellular chain complex of $\Gamma(\Delta)$ for a 
  $G$-invariant simplicial complex over a ground set with free $G$-action.

  For the proof of Theorem \ref{shell-thm} (ii) it will turn out to be
  profitable to code classes $[w]$ by acyclic orientations.
  Let $\Delta$ be a simplicial complex on the vertex set $S$ and let $G
  = (S,E)$ be a simple graph on the same vertex set.
  To $w \in \Gamma(\Delta)$ we assign a directed graph 
  $D_w = (c(w), E_w)$ with vertex set $c(w)$ and with a directed edge
  from $a$ to $b$ whenever $\{a,b\} \in E$ and $a$ precedes
  $b$ in the word $w$. 

  \begin{lemma} \label{acyclic-lemma}
    Let $\Delta$ be a simplicial complex on ground set $S$ and 
    let $G = (S,E)$ be a simple graph. 
    Then the following hold:
    \begin{itemize}
      \item[(i)] 
         For $w \in \Gamma(\Delta)$ the directed graph $D_w$ is
         acyclic.
      \item[(ii)] 
         For $[w] \in \Gamma/G(\Delta)$ and $w' \in [w]$ we have
         $D_w = D_{w'}$. In particular, the map $[w] \mapsto D_w$ is
         well defined for $[w] \in \Gamma/G(\Delta)$.
      \item[(iii)] 
         For each face $\sigma \in \Delta$, the map 
         $[w] \mapsto D_{[w]}$ provides a bijection 
         between faces of $\Gamma/G(\Delta)$ with content $\sigma$ and
         acyclic orientations of the induced subgraph $G|_\sigma$.
      \item[(iv)] The map $[w] \mapsto D_w$ is an isomorphism of
         partially ordered sets between $\Gamma/G(\Delta)$ 
         and the set of acyclic orientations of induced subgraphs
         $G|_\sigma$ for $\sigma \in \Delta$ ordered by 
         inclusion of vertex and edge sets.
      \end{itemize}
  \end{lemma}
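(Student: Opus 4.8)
The plan is to establish the four parts in order, with essentially all of the work concentrated in the injectivity half of part~(iii). For part~(i), I would note that a directed edge $a \to b$ of $D_w$ records that $a$ precedes $b$ in the injective word $w$; since the letters of $w$ are totally ordered by their positions, a directed cycle $a_1 \to a_2 \to \cdots \to a_k \to a_1$ would force $a_1$ to precede itself, a contradiction. For part~(ii), I would observe that an allowed commutation $ss' \to s's$ swaps two adjacent letters with $\{s,s'\} \notin E$, and hence leaves unchanged the relative order of every pair $\{a,b\} \in E$ lying in $c(w)$: such a pair cannot equal $\{s,s'\}$, and interchanging two adjacent letters never alters the order of either one relative to a third letter. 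Since the edges and their orientations in $D_w$ depend only on the relative order of the edge-pairs of $G$ inside $c(w)$, a single commutation preserves $D_w$, and therefore so does any sequence of them; thus $D_w = D_{w'}$ for all $w' \in [w]$.

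For part~(iii), well-definedness of the map $[w] \mapsto D_{[w]}$ is exactly part~(ii), and by part~(i) its value is an acyclic orientation of the full edge set of $G|_\sigma$, so the map does land in the claimed target. For surjectivity I would start from an acyclic orientation of $G|_\sigma$, take a topological sort (a linear extension of the partial order generated by the oriented edges) to obtain a word $w$ with $c(w) = \sigma \in \Delta$, and read off that $D_w$ recovers the given orientation. The substantive point is injectivity, which is a trace-monoid / Cartier--Foata type statement: two injective words of the same content $\sigma$ that orient every edge of $G|_\sigma$ identically must be related by commutations of non-adjacent neighbors, i.e.\ lie in the same class of $\Gamma/G(\Delta)$.

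I expect this injectivity to be the main obstacle, and I would prove it by induction on the common length of $w_1$ and $w_2$. Let $a$ be the first letter of $w_1$. Since no letter precedes $a$ in $w_1$, the vertex $a$ is a source of $D_{w_1} = D_{w_2}$, so in $w_2$ every neighbor of $a$ in $G$ must appear after $a$; equivalently, every letter preceding $a$ in $w_2$ is non-adjacent to $a$. Hence a sequence of commutations moves $a$ to the front, producing $w_2' \in [w_2]$ of the form $w_2' = a u_2$. Writing $w_1 = a u_1$, the words $u_1$ and $u_2$ both have content $\sigma \setminus \{a\}$ and induce the same orientation, namely the restriction of $D_{w_1} = D_{w_2}$ to $\sigma \setminus \{a\}$. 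By the inductive hypothesis $[u_1] = [u_2]$, whence $[w_1] = [a u_1] = [a u_2] = [w_2'] = [w_2]$.

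Finally, for part~(iv) I would promote the content-wise bijections of part~(iii) to a poset isomorphism by checking compatibility with the orders in both directions. If $[v] \preceq [w]$, choose representatives $v' \preceq w'$ with $v'$ a subword of $w'$; then $c(v) \subseteq c(w)$, and since passing to a subword preserves relative orders, $D_v = D_{v'}$ is the restriction of $D_{w'} = D_w$ to $c(v)$, so $D_v$ is contained in $D_w$ in both vertex and edge sets. Conversely, if $D_v$ is contained in $D_w$, I would restrict $w$ to the subword $w|_{c(v)}$ on content $c(v)$, whose orientation is $D_w$ restricted to $c(v)$; this equals $D_v$ because $D_v$ already orients every edge of $G|_{c(v)}$ compatibly with $D_w$. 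Injectivity from part~(iii) then gives $[w|_{c(v)}] = [v]$, and as $w|_{c(v)} \preceq w$ we conclude $[v] \preceq [w]$. This shows that $[w] \mapsto D_w$ is an order isomorphism onto the stated poset of acyclic orientations.
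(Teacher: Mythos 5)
Your proposal is correct and follows essentially the same route as the paper: the same left-to-right argument for acyclicity, the same observation that allowed commutations cannot flip the relative order of any $G$-adjacent pair, a topological sort for surjectivity, the same source-extraction-plus-induction argument for injectivity (pull the first letter of one word to the front of the other via allowed commutations, then induct on the shorter words), and the same subword/restriction check for the order isomorphism. Your write-up is somewhat more detailed than the paper's (notably in parts (ii) and (iv), which the paper largely leaves to the reader), but there is no substantive difference in method.
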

  \begin{proof}
    \begin{itemize}
       \item[(i)] 
          Since edges in $D_w$ are directed from left to right in 
          $w$, the graph $D_w$ cannot have any directed cycles.
       \item[(ii)] 
          $D_{[w]}$ is well-defined, because if $e = ab \in G$ and $a$ 
          appears before $b$ in some representative $w$, then $a$ appears 
          before $b$ in every representative. This is because any sequence 
          of commutations of neighboring letters transforming $w$ into a 
          word in which $b$ appears before $a$ must contain a step in which 
          $a$ and $b$ are transposed, which is forbidden.
       \item[(iii)] 
          To prove that the map is surjective, simply note that 
          every acyclic digraph $D$ on ground set $\sigma$ admits a linear 
          extension $w$, i.e., a linear ordering of $\sigma$ such that all 
          edges of $D$ go from smaller to larger vertices. Then
          clearly $D = D_{w}$.

          To prove that the map is injective, suppose that $[w]$ and $[w']$
          yield the same acyclic orientation $D_w = D_{w'}$. Let $b$
          be the first element of $w$ and write $w = b \gamma$ and $w'
          = a_1a_2\cdots a_r b \gamma'$, where $\gamma$ and $\gamma'$
          denote words and $a_1, \ldots, a_r$ letters. By
          construction, $D_w$ contains no edge directed to $b$. 
          Since $D_w = D_{w'}$ this implies that $b$ is not adjacent
          to any $a_i$, $1 \leq i \leq r$, in $G$.
          In particular, we may apply a sequence of
          commutations on $w'$ to obtain the word $w'' = b a_1a_2\cdots a_r
          \gamma'$. By a simple induction argument, we may transform 
          $a_1a_2\cdots a_r \gamma'$ into $\gamma$ via a sequence of
          commutations, which yields that $w$ and $w''$, and hence $w$ and
          $w'$, belong to the same commutation class $[w]$.
       \item[(iv)] By (iii), it remains to verify that 
          $[w'] \preceq [w]$ if and only if $D_{w'}$ is the subgraph of 
          $D_w$ induced on $c(w')$. But this fact is immediate from the 
          definition of $D_w$.  
    \end{itemize}
  \end{proof}

  \begin{proof}[Proof of Theorem {\rm \ref{shell-thm} (ii)}]
    By Lemma~\ref{acyclic-lemma}, we may identify a given face $[w]$
    of $\Gamma$ with the acyclic orientation $D_w$ of $G|_{c(w)}$
    induced by $[w]$.

    Fix a linear order on $S$.
    For vertices $i$ and $j$ of a digraph $D$, we write $i
    \overset{D}{\longrightarrow} j$ if there is a directed path from
    $i$ to $j$. By convention,  
    $i \overset{D}{\longrightarrow} i$ for all $i$.

    In the following we set up functions on vertex sets of digraphs and an
    order relation on these functions that will later be the key ingredient in the
    definition of the shelling.
 
    For a digraph $D$ on vertex set $\rho$, define a function
    $\delta_D = \delta : \rho \rightarrow \rho$ by 
    $$
    \delta(i) = \min \{j~|~
    i \overset{D}{\longrightarrow} j\}.
    $$
    Note that $\delta(i) \le i$ and $\delta^2(i) = \delta(i)$ for all
    $i$.
    Since $\delta^2 = \delta$, it
    is clear that $\delta(i) = i$ whenever $\# \delta^{-1}(\{i\}) = 1$.
    For two functions $\delta_1, \delta_2 : \rho \rightarrow \rho$,
    say that $\delta_1 > \delta_2$ if $\delta_1(x)
    \ge \delta_2(x)$ for all $x \in \rho$ with strict inequality for
    some $x$.

   \noindent {\sf Claim 1:} 
      Let $D$ be an acyclic orientation of $G|_{\rho}$ and 
      let $x \in \rho$. If $\delta_{D}^{-1}(\{x\}) = \{x\}$ then the
      restriction of $\delta_{D}$ to $\rho \setminus \{ x\}$ coincides
      with $\delta_{D \setminus \{x\}}$. 

    \noindent $\triangleleft$ {\sf Proof:} 
      Suppose that we have a path from a vertex $y \neq x$
      to $x$. By construction, $\delta_{D}(y) = a$ for some $a < x$.
      Since there is no path from $x$ to $a$ it follows that there is a
      path from $y$ to $a$ in $D \setminus \{x\}$ and hence that
      $\delta_{D \setminus \{x\}}(y) = a$. 
    $\triangleright$
 
    \noindent {\sf Claim 2:} Let $D$ be an acyclic orientation
      of $G|_{\rho}$ and let $x \in S \setminus \rho$. Then there is a
      unique acyclic
      orientation $D'$ of $G|_{\rho+x}$ containing $D$ such that the 
      restriction of $\delta_{D'}$ to $\rho$ coincides with 
      $\delta_{D}$ and such that $\delta_{D'}(x) =
      x$. The digraph $D'$ has the property that $\delta_{D'}>
      \delta_{D''}$ for all other digraphs $D''$ on $\rho+x$ containing
      $D$.

    \noindent $\triangleleft$ {\sf Proof:}
      Let $A$ be the subset of $\rho$ consisting of all
      elements $a$ such that $\delta_{D}(a) < x$. Consider an acyclic
      orientation $D''$ of $G|_{\rho+x}$ containing $D$.
      Let $e = xb$ be an edge in $G|_{\rho+x}$. If $b \in A$ and $e$ is
      directed from $x$ to $b$, then $\delta_{D''}(x) \le
      \delta_{D}(b) < x$. 
      If $b \in \rho \setminus A$ and $e$ is directed from $b$ to
      $x$, then $\delta_{D''}(b) \le x < \delta_{D}(b)$. 
      Thus for the conditions in Claim 2 to hold, we must direct
      $e$ from $b$ to $x$ whenever $b \in A$ and from $x$ to $b$
      whenever $b \in \rho \setminus A$. For the particular face
      $D'$ with this property, one easily checks that the conditions
      are indeed satisfied. 
      The final statement in Claim 2 follows immediately.
      $\triangleright$
   
      To show that $\Gamma/G(\Delta)$ is shellable, it suffices to 
      verify the following claim.

    \noindent {\sf Claim 3:} Let $\tau$ be a maximal face of $\Delta$ 
       and let $\sigma \subseteq
       \tau$. Then the family
       $$
         \Gamma_{\sigma,\tau} := \{ D_w \in \Gamma/G(\Delta)~|~\sigma \subseteq c(w) \subseteq
            \tau\}
       $$
       admits a partition into intervals 
       \begin{equation}
          [D_1|_{\sigma_1},D_1], \ldots, 
          [D_r|_{\sigma_r},D_r]
          \label{dipartition-eq}
       \end{equation}
       such that 
       $D_i|_{\sigma_i}$ is not a subdigraph of $D_j$ unless
       $i \le j$ and such that each $D_i$ is an acyclic orientation of
       $G|_{\tau}$. Here, $D_i|_{\sigma_i}$ is the induced subdigraph
       of $D_i$ on the vertex set $\sigma_i$.

    Before we proceed to the proof of Claim 3 we provide the
    arguments that show the sufficiency of Claim 3 for shellability
    of $\Gamma/G(\Delta)$. 

    From the shellability of $\Delta$ we deduce from Proposition
    \ref{shellingorder} that there is a maximal face $\tau$ and a face
    $\sigma \subseteq \tau$ such that $\Delta \setminus [\sigma,
    \tau]$ is shellable. Since $\Gamma/G(\Delta) \setminus
    \Gamma_{\sigma,\tau} = \Gamma/G(\Delta \setminus [\sigma, \tau])$,
    Claim 3 implies by inductive applications of Proposition
    \ref{shellingorder} that $\Gamma/G(\Delta)$ is shellable.

    \noindent $\triangleleft$ {\sf Proof of Claim 3:}
      Let $D_1, \ldots, D_r$ be the acyclic orientations of
      $G|_{\tau}$ ordered such that $i<j$ whenever 
      $\delta_{D_i} > \delta_{D_j}$.
      For any $D_i$, let $X_i$ be the set of
      elements $x\in \tau \setminus \sigma$ such that
      $\delta_{D_i}^{-1}(\{x\}) = \{x\}$. Define $\sigma_i = \tau
      \setminus X_i$. 

      We claim that the intervals $[D_i|_{\sigma_i},D_i]$ yield the
      desired partition. First, repeated application of (i) yields that
      $\delta_{D_i|_{\sigma_i}}$ is the restriction of $\delta_{D_i}$ to
      $\sigma_i$. Moreover, repeated application of (ii) yields that
      $\delta_{D_i}(x) \ge \delta_{D_j}(x)$ for all $x \in \tau$
      whenever $D_i|_{\sigma_i}$ is a subdigraph of $D_j$ and that the
      inequality is strict for some $x$, and hence $i < j$, if $D_i \neq
      D_j$. 
      By a similar argument, one obtains that any digraph
      $D_i|_{\rho}$ such that $\sigma_i \subseteq \rho \subseteq \tau$
      has the same property. 
      In particular, we obtain the desired claim.

      Now, let $[\sigma,\tau]$ be the last interval in the shelling
      order of $\Delta$. By induction, we know that 
      $\Gamma_0 = \Gamma/G(\Delta \setminus [\sigma,\tau])$ is
      shellable. Suppose that we have an ordered partition of the form
      (\ref{dipartition-eq})
      of the 
      remaining family $\{ D \in \Gamma/G(\Delta) \setminus \sigma \subseteq
      V(D) \subseteq \tau\}$ with properties as above.
      For $1 \le i \le r$, define $\Gamma_i = \Gamma_{i-1} \cup 
      [D_i|_{\sigma_i},D_i]$. 

      We claim that each $\Gamma_i$ defines a
      pure Boolean cell complex; by definition and induction on $i$, this
      will imply that each $\Gamma_i$ is a shellable Boolean cell complex. By
      assumption the claim is true for $i=0$. Assume that $i > 0$. All
      maximal cells of  
      $\Gamma_i$ have the same dimension $\dim \Delta$, because this is
      true in $\Gamma_0$, and $\Gamma_i$ is the union of $\Gamma_0$ and
      a sequence of intervals in which each top element has dimension
      $\dim \Delta$. It remains to prove that all subfaces of $D_i$
      belong to $\Gamma_i$. Let $D_i|_{\rho}$ be such a subface. If
      $D_i|_{\rho}$ belongs to $\Gamma_0$, then we are done. Otherwise, 
      $D_i|_{\rho} \in [D_j|_{\sigma_j}, D_j]$ for some $1 \leq j \leq r$.
      By construction, $j \le i$, which implies that 
      $D_i|_{\rho} \in \Gamma_i$ as desired.
    $\triangleright$
  \end{proof}

  \begin{example}
    \label{shell-ex}
    Let $\Delta$ be the simplicial complex on ground set
    $S = \{ 1,\cdots, 5\}$ with maximal faces $1234,
    2346, 3456$. The ordered partition 
    $$
      \mbox{}[\emptyset,1234], [6,2346],[5,2345]
    $$
    defines a shelling order of $\Delta$.
    Let $G$ be the graph with vertex set $S$ and edge set
    $\{12,13,24,34,46,56\}$. 
    Table~\ref{shell-table} provides a shelling order of
    $\Gamma/G(\Delta)$ constructed as in the proof of
    Theorem~\ref{shell-thm} (ii) from the given shelling order
    on $\Delta$ with the natural order on $S$. In the table,
    each acyclic orientation $D_i$ is represented by its
    lexicographically smallest representative. The
    function $\delta_{D_i}$ is represented as a word
    $a_1a_2a_3a_4a_5a_6$, where $a_i = \delta_{D_i}(i)$ if $i \in
    V(D_i)$ and $a_i = *$ otherwise. Underlined values $k$ have the
    property that $\delta^{-1}(\{k\}) = \{k\}$.

    \begin{table}
      \label{shell-table}
      \caption{Shelling order for the complex $\Gamma/G(\Delta)$,
	where
	$\Delta$ and $G$ are defined in Example~\ref{shell-ex}.}
      \noindent
      \begin{tabular}{||cccl||cccl||}
	\hline
	$i$ & $D_i$ & $\delta_{D_i}$ & $D_i[\sigma_i]$ &
	$i$ & $D_i$ & $\delta_{D_i}$ & $D_i[\sigma_i]$ \\
	\hline
	\hline
	1& 1234 & {\underline1}{\underline2}{\underline3}{\underline4}{*}{*} &  $\emptyset$  &
	16 & 2364 & {*}\underline2\underline34{*}4 & 64 \\
	2 & 1243 & {\underline1}{\underline2}33{*}{*} & 43 &
	17 & 2436 & {*}\underline233{*}\underline6 & 436 \\
	3 & 1423 & {\underline1}2{\underline3}2{*}{*} & 42  &
	18 & 2643 & {*}\underline233{*}3 & 643 \\
	4 & 1342 & {\underline1}222{*}{*} & 342 &
	19 & 4236 & {*}2\underline32{*}\underline6 & 426 \\
	5 & 3124 & 1{\underline2}1{\underline4}{*}{*} & 31   &
	20 & 6423 & {*}2\underline32{*}2 & 642 \\
	6 & 3142 & 1212{*}{*} & 3142 &
	21 & 3426 & {*}222{*}\underline6 & 3426 \\
	7 & 4312 & 1{\underline2}11{*}{*} & 431  &
	22 & 3642 & {*}222{*}2 & 3642 \\
	\cline{5-8}
	8 & 2134 & 11{\underline3}{\underline4}{*}{*} & 21   &
	23 & 3456 & {*}{*}\underline3\underline4\underline5\underline6 & 5 \\
	9 & 2143 & 1133{*}{*} & 2143 &
	24 & 3465 & {*}{*}\underline3\underline455 & 65 \\
	10 & 4213 & 11{\underline3}1{*}{*} & 421  &
	25 & 3645 & {*}{*}\underline34\underline54 & 645 \\
	11 & 2314 & 111{\underline4}{*}{*} & 231  & 
	26 & 3564 & {*}{*}\underline3444 & 564 \\
	12 & 2431 & 1111{*}{*} & 2431 &
	27 & 4356 & {*}{*}33\underline5\underline6 & 435 \\
	13 & 3421 & 1111{*}{*} & 3421 &
	28 & 4365 & {*}{*}3355 & 4365 \\
	14 & 4231 & 1111{*}{*} & 4231 &
	29 & 6435 & {*}{*}33\underline53 & 6435 \\
	\cline{1-4}
	15 & 2346 & {*}\underline2\underline3\underline4{*}\underline6 & 6 &
	30 & 5643 & {*}{*}3333 & 5643 \\
	\hline
      \end{tabular}
    \end{table}
    \
  \end{example}

  An analysis of the proof of Theorem \ref{shell-thm} (ii)
  allows us to describe the rank of the homology groups
  of $\Gamma/G(\Delta)$ for
  shellable $\Delta$. 

  \begin{corollary} \label{homology}
    Let $\Delta$ be a shellable $d$-dimensional
    simplicial complex on ground set $S$ 
    with shelling order $[\sigma_1,\tau_1]$, $\ldots$,
    $[\sigma_r,\tau_r]$. Fix a linear
    order $<$ on $S$. Then the rank of the unique non-vanishing
    reduced homology group $\widetilde{H}_{d} (\Gamma/G(\Delta);\integers)$
    of $\Gamma/G(\Delta)$ equals the number of pairs $(\tau_i,D)$
    where $1 \le i \le r$ and $D$ is an acyclic
    orientation of $G|_{\tau_i}$ such that for all $x \in
    \tau_i\setminus \sigma_i$ there is a $y \in \tau_i \setminus
    \{x\}$ such that one of the following conditions holds.
    \begin{itemize}
       \item[(C1)] $y < x$ and there 
         is a directed path from $x$ to $y$ in $D$.
       \item[(C2)] $y > x$ and there is a directed path from
         $y$ to $x$ in $D$ and for no $z < x$ there is 
         a directed path from $y$ to $z$ in $D$.
    \end{itemize}
  \end{corollary}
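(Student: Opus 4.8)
The plan is to read the homology rank directly off the shelling order of $\Gamma/G(\Delta)$ constructed in the proof of Theorem~\ref{shell-thm}~(ii). By Proposition~\ref{shellingorder}, for any shellable Boolean cell complex of dimension $d$ the rank of the top reduced homology group equals the number of intervals $[\sigma_j,\tau_j]$ in a shelling order that collapse to a single point, namely those with $\sigma_j = \tau_j$. It therefore suffices to count, across the whole constructed shelling, the blocks of this degenerate type.

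Recall the shape of that shelling. For each interval $[\sigma_i,\tau_i]$ of the given shelling of $\Delta$, the family $\Gamma_{\sigma_i,\tau_i}$ is partitioned into intervals $[D|_{\sigma_D},D]$, where $D$ ranges over the acyclic orientations of $G|_{\tau_i}$ and $\sigma_D = \tau_i \setminus X_D$ with
$$
X_D = \{\, x \in \tau_i \setminus \sigma_i : \delta_D^{-1}(\{x\}) = \{x\} \,\}.
$$
Such an interval is a single point precisely when $\sigma_D = \tau_i$, i.e.\ when $X_D = \emptyset$. Since the maximal faces of $\Delta$ occur as the distinct tops $\tau_i$ of the shelling, the homology rank is thus the total number of pairs $(\tau_i,D)$ for which every $x \in \tau_i \setminus \sigma_i$ satisfies $\delta_D^{-1}(\{x\}) \neq \{x\}$.

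The heart of the proof is to re-express the condition $\delta_D^{-1}(\{x\}) \neq \{x\}$ through directed paths, matching (C1) and (C2). Writing $\delta = \delta_D$, the equality $\delta^{-1}(\{x\}) = \{x\}$ asserts both that $\delta(x) = x$ and that no $y \neq x$ has $\delta(y) = x$; I would negate this and translate each clause via $\delta(i) = \min\{\, j : i \overset{D}{\longrightarrow} j \,\}$. Because $x \overset{D}{\longrightarrow} x$ forces $\delta(x) \le x$, the failure $\delta(x) \neq x$ means some $z < x$ is reachable from $x$, which with $y = z$ is exactly (C1). Because $\delta(y) \le y$ always holds, any $y \neq x$ with $\delta(y) = x$ must have $y > x$, and $\delta(y) = x$ says precisely that $y \overset{D}{\longrightarrow} x$ while no $z < x$ is reachable from $y$, which is exactly (C2). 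Both implications reverse at once: a witness for (C1) gives $\delta(x) \le y < x$, and a witness for (C2) gives $\delta(y) = x$ with $y \neq x$. Hence $\delta_D^{-1}(\{x\}) \neq \{x\}$ holds iff some $y \in \tau_i \setminus \{x\}$ satisfies (C1) or (C2), and summing over the shelling produces the stated count.

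I expect the only delicate point to be this last translation. One must keep in mind that $\delta$ records the smallest reachable vertex, so that $\delta(x) = x$ encodes ``nothing smaller than $x$ is reachable from $x$'', and one must treat separately the two disjoint ways the fibre can fail to be a singleton---$x$ itself reaching a smaller vertex (C1), versus a larger vertex landing exactly on $x$ (C2). Everything else is bookkeeping inside the shelling already established in Theorem~\ref{shell-thm}~(ii).
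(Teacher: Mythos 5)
Your proposal is correct and follows essentially the same route as the paper: read the rank off the shelling of $\Gamma/G(\Delta)$ constructed in Theorem~\ref{shell-thm}~(ii) via Proposition~\ref{shellingorder} (counting singleton intervals, i.e.\ orientations with $X_D = \emptyset$), and then translate $\delta_D^{-1}(\{x\}) \neq \{x\}$ into conditions (C1) and (C2). Your translation---negating ``$\delta(x)=x$ and no $y\neq x$ maps to $x$''---matches the paper's two-case analysis (empty fiber versus fiber of size at least two) and is in fact stated slightly more carefully, since the paper's case~(2) writes ``$z<y$'' where (C2) correctly requires ``$z<x$''.
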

  \begin{proof}
     From the proof of Theorem  \ref{shell-thm} (ii)
     and Proposition \ref{shellingorder} we deduce that $\rank_\integers 
     \widetilde{H}_{d} (\Gamma/G(\Delta);\integers)$ is given by the
     number of pairs $(\tau_i,D)$ where $1 \le i \le r$ and $D$ is an
     acyclic orientation of $G|_{\tau_i}$ 
     such that for all $x \in \tau_i \setminus \sigma_i$ we have
     $\delta_D^{-1}(\{x\}) \neq \{ x\}$. 
     We distinguish two cases:
     \begin{itemize}
     \item[(1)] $\delta_D^{-1}(\{x\}) = \emptyset$. In this case
       there is a $y < x$ for which there is a directed path from
       $x$ to $y$ in $D$.
     \item[(2)] $\# \delta_D^{-1}(\{x\}) \geq 2$. In this case
       there is a $y > x$ for which there is a directed path from
       $y$ to $x$ in $D$ and for no $z < y$ there is a directed path
       from $y$ to $z$ in $D$.
     \end{itemize}
     It is easy to see that (1) and (2) are equivalent to (C1)
     and (C2), respectively. 
  \end{proof}

  \begin{example}
    Let $G$ be a graph on the set $S = \{1, \ldots, n\}$ and 
    $\Delta_n = 2^S$ the full simplex. We consider the natural order on $S$. 
    By Corollary \ref{homology} the
    rank of the top homology group of $\Gamma/G(2^S)$ is equal to the
    number of acyclic orientations $D$ of $G$ such that for each vertex 
    $x \in S$ there is a $y \in S$ satisfying at least one of (C1) and (C2) from 
    Corollary \ref{homology}.

    \begin{itemize}
      \item[(i)] 
        For the complete graph $G = K_n$, by the work of Farmer \cite{Farmer} the homology
        rank is known to be the number of fix-point free permutations. 
        It is an interesting question whether there exists a simple bijection
        between such permutations and acyclic orientations of $K_n$
        satisfying the conditions of Corollary \ref{homology}.
     \item[(ii)] 
        Now we consider the graph $G$ on vertex set $S$ with 
        all edges present except for $\{1,2\}, \{2,3\},\ldots,
        \{n-1,n\},\{1,n\}$. 
        To avoid trivialities consider only $n \geq 4$. 
        Computer calculations for $4 \leq n \leq 9$ suggest that

        $$\rank_\integers \widetilde{H}_{n-1}(\Gamma/G(\Delta_n),\integers) 
        - (-1)^n  = a_n,$$
	where $a_n$ is the number of ways to arrange $n$ non-attacking
        kings on an $n \times n$ chessboard with two sides identified
        to form a cylinder, with one king in each row and
        one king in each column. 
        Is this indeed true for all $n \geq 4$ ? Is there a nice
        bijective proof? Note that the left-hand side equals the
        absolute value of the unreduced Euler characteristic of
        $\Gamma/G(\Delta_n)$.
	
        We refer the reader to Abramson and Moser \cite{AbramsonMoser} for more
        information on the number $a_n$. 
	For small values of $n$, we have that $a_4 = 0$, $a_5 = 10$,
        $a_6 = 60$, $a_7 = 462$, $a_8 = 3920$, $a_9 = 36954$, and
        $a_{10} = 382740$.
     \end{itemize}
   \end{example}

  \section{Cohen-Macaulay and sequentially Cohen-Macaulay complexes}
  \label{cm-sec}

    For the formulation of the results of this section
    we need to review some facts about (sequential) Cohen-Macaulayness.
    Recall (see e.g. \cite{BjornerWachsWelker07}) that a simplicial complex $\Delta$ 
    is called sequentially homotopy Cohen-Macaulay ({\SHCM} for short)
    if for all $r \geq 0$ and all $\sigma \in \Delta$ the subcomplex 
    $(\link_\Delta (\sigma))^{\langle r \rangle}$ 
    generated by all maximal faces of dimension $\geq r$ in 
    $\link_\Delta (\sigma)$ is $(r-1)$-connected.
    For $\field$ a field or $\field = \integers$ a simplicial complex $\Delta$ 
    is called sequentially Cohen-Macaulay over $\field$ ({\SCMk} for short)
    if for all $r \geq 0$ and all $\sigma \in \Delta$ the subcomplex 
    $(\link_\Delta (\sigma))^{\langle r \rangle}$ 
    generated by all maximal faces of dimension $\geq r$ in 
    $\link_\Delta (\sigma)$ has vanishing reduced simplicial homology in
    dimensions $0$ through $(r-1)$.

    In order to define {\SHCM}, {\SCMk}, {\HCM} and {\CMk} for partially
    ordered sets we need to introduce the order complex.
    For a partially ordered set $Q = (M,\leq_Q)$ on ground set $M$ we 
    denote by $\Delta(Q) = \{ m_0 <_Q \cdots < m_l ~|~m_i \in M, l \geq -1 \}$
    its order complex. If $Q$ is the face poset of a Boolean 
    cell complex $\Gamma$ then $\Delta(Q)$ is the barycentric
    subdivision of $\Gamma$. 
 
    We call a partially ordered set $Q = (M,\leq_Q)$ on ground
    set $M$ {\SHCM} (resp\mbox{.} {\SCMk}, {\HCM}, {\CMk}) if $\Delta(Q)$ is {\SHCM}
    (resp\mbox{.} {\SCMk}, {\HCM}, {\CMk}). 
    In particular, we call a Boolean cell complex $\Gamma$ 
    {\SHCM} (resp\mbox{.} {\SCMk}, {\HCM}, {\CMk}) if its barycentric subdivision
    $\Delta(\Gamma)$ is {\SHCM} (resp\mbox{.} {\SCMk}, {\HCM}, {\CMk}).

    A partially ordered set $Q = (M,\leq_Q)$ is called pure if all
    inclusionwise maximal faces of $\Delta(Q)$ have the same dimension. 
    For $m \in M$, we denote by $Q_{\leq m}$ the subposet of $Q$ on ground  
    set $M_{\leq m} = \{ m' \in M~|~m' \leq_Q m \}$. We call $Q$ semipure if
    the poset $Q_{\leq m}$ is pure for all $m \in M$.
    The rank of an element $m \in M$ is the dimension of the simplicial
    complex $\Delta(Q_{\leq m})$. 
    Note that if $Q$ is a pure partially ordered set then the concepts 
    {\SHCM} and {\HCM} (resp\mbox{.} {\SCMk} and {\CMk}) coincide. 
    It is well-known and easy to prove that shellable Boolean cell complexes 
    are {\HCM} and hence also {\CMk}.
    For simplicial complexes, it is well-known that the properties of
    being {\SHCM}, {\SCMk}, {\HCM}, and {\CMk} are preserved
    under barycentric subdivision. 

    The key ingredients to the proof of Theorem \ref{cm-thm} are 
    Theorem \ref{shell-thm} and the following results from \cite{BjornerWachsWelker05}.

    \begin{proposition}[{\cite[Theorem 5.1]{BjornerWachsWelker05}}]
      Let $R = (N, \leq_R)$ and $Q = (M,\leq_Q)$ 
      be semipure partially ordered sets and let $f : R \rightarrow Q$ be a
      surjective and rank-preserving map of partially ordered sets.
      \begin{itemize}
        \item[(i)] Assume that for all $m \in M$ the fiber
          $\Delta(f^{-1}(Q_{\leq m}))$ 
          is {\HCM}. If $Q$ is {\SHCM}, then so is $R$. 
        \item[(ii)] Let $\field$ be a field or $\field = \integers$ and
          assume that for all $m \in M$ the fiber $\Delta(f^{-1}(Q_{\leq
	  m}))$ is {\SCMk}. If $Q$ is {\SCMk}, then so is $R$. 
      \end{itemize}
      \label{poset-fiber} 
    \end{proposition}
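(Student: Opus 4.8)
The plan is to prove (i) and (ii) simultaneously by induction, deducing the sequential statements from their pure counterparts and proving the pure case through a vertex-link induction whose base step is a connectivity estimate for a homotopy colimit. Throughout I treat the homotopy version and the over-$\field$ version in parallel, replacing ``$n$-connected'' by ``reduced homology over $\field$ vanishing up to degree $n$'' in the homological case.

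First I would settle the \emph{pure} fiber theorem: if $R$ and $Q$ are pure, $f$ is surjective and rank-preserving, every fiber $\Delta(f^{-1}(Q_{\le m}))$ is \HCM\ (resp. \CMk), and $Q$ is \HCM\ (resp. \CMk), then so is $R$. I would argue by induction on $\# R$. Purity of $\Delta(R)$ is immediate from rank-preservation together with purity of the fibers and of $Q$, so by the Reisner--Munkres criterion it remains to check that $\Delta(R)$ has the correct top connectivity and that every vertex link is \HCM\ (resp. \CMk). Writing $p = f(x)$, the link splits as the join $\link_{\Delta(R)}(x) = \Delta(R_{<x}) * \Delta(R_{>x})$, and since a join of \HCM\ (resp. \CMk) complexes is again of that type, it suffices to treat the two factors. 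The lower factor comes for free: because $x$ is a maximal element of the fiber $F_p = f^{-1}(Q_{\le p})$ and has maximal rank there, one checks $\Delta(R_{<x}) = \link_{\Delta(F_p)}(x)$, which is \HCM\ (resp. \CMk) as a link of a \CMk\ complex. For the upper factor I would apply the induction hypothesis to the restricted map $f|_{R_{>x}} : R_{>x} \to Q_{>p}$: it is again rank-preserving, its fibers are links in the original (Cohen--Macaulay) fibers, and its base $Q_{>p}$ is a link in the \CMk\ poset $Q$, hence \CMk.

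The one point that needs care here is the \emph{surjectivity} of $f|_{R_{>x}}$ onto $Q_{>p}$, and this is where the Cohen--Macaulayness of the fibers does real work. Given $q >_Q p$, the fiber $\Delta(F_q)$ is pure of dimension $\rank(q)$, so extending $\{x\}$ to a maximal chain of $\Delta(F_q)$ produces a top element $z$ with $\rank(z)=\rank(q)$ and $f(z)\le q$; strict monotonicity of rank then forces $f(z)=q$, while $z >_R x$ since the ranks differ. Thus every $q >_Q p$ is hit, the induction applies, and the pure case follows once the base connectivity of $\Delta(R)$ is in hand.

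That base connectivity is the analytic heart of the argument and, I expect, the main obstacle. I would obtain it from the Quillen-type fiber/homotopy-colimit machinery: $\Delta(R)$ is homotopy equivalent to a homotopy colimit over $\Delta(Q)$ of the diagram $m \mapsto \Delta(f^{-1}(Q_{\le m}))$, so a generalized nerve/wedge lemma bounds the connectivity of $\Delta(R)$ from below in terms of that of $\Delta(Q)$ and of the fibers, with rank-preservation pinning all the dimensions so that exactly the ``\CMk\ of the expected dimension'' estimate propagates; in the over-$\field$ case the same decomposition feeds a Mayer--Vietoris spectral sequence whose $E_2$-page is assembled from the homology of $\Delta(Q)$ with coefficients in the fiber homologies, forcing the required vanishing. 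Finally, to pass from the pure case to the sequential statements, I would invoke the standard characterization of sequential Cohen--Macaulayness through the \HCM\ (resp. \CMk) property of the pure $r$-skeleta, and observe that the rank-$\ge r$ truncation of $f$ is again a surjective rank-preserving map of semipure posets whose fibers are the corresponding skeleta of the original (sequentially Cohen--Macaulay) fibers; applying the pure case to each truncation yields the sequential conclusion. Reconciling these skeleta, which are not literally order complexes of posets, with the poset-level homotopy-colimit decomposition is the delicate bookkeeping I expect to consume most of the remaining work.
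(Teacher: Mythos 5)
This proposition is not proved in the paper at all: it is quoted verbatim, with citation, as Theorem 5.1 of Bj\"orner--Wachs--Welker \cite{BjornerWachsWelker05}, so there is no internal proof to compare against and the only meaningful benchmark is that source. Measured against it, your outline tracks the actual argument rather closely, and several of your steps are correct and genuinely load-bearing there too: the join decomposition $\link_{\Delta(R)}(x) = \Delta(R_{<x}) * \Delta(R_{>x})$, the identification of the lower factor with a link inside the fiber $\Delta(f^{-1}(Q_{\leq p}))$ (using that rank-preservation forces $x$ to be maximal in that fiber), and above all your surjectivity lemma for $f|_{R_{>x}} : R_{>x} \rightarrow Q_{>p}$ --- extending $\{x\}$ to a maximal chain of the pure fiber $\Delta(f^{-1}(Q_{\leq q}))$ and using strict rank monotonicity to force the top element to map onto $q$ --- are all sound, and the last one is exactly where the purity of the \HCM\ (resp.\ \CMk) fibers does its work.

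There are, however, two genuine gaps. First, the ``analytic heart'' is not an off-the-shelf nerve lemma: the classical nerve lemma needs contractible pieces, whereas here the fibers are wedges of spheres and the diagram maps $\Delta(f^{-1}(Q_{\leq q})) \hookrightarrow \Delta(f^{-1}(Q_{\leq q'}))$ are not null-homotopic a priori. Getting from the homotopy colimit description of $\Delta(R)$ to the connectivity bound requires the wedge decomposition $\Delta(R) \simeq \Delta(Q) \vee \bigvee_{q} \bigl( \Delta(f^{-1}(Q_{\leq q})) * \Delta(Q_{>q}) \bigr)$, which is precisely Theorem 1.1 of \cite{BjornerWachsWelker05}; its proof (projection, homotopy and wedge lemmas for diagrams of spaces, plus an inductive replacement of the diagram, and a spectral sequence in the homological version) is the real content of that paper and remains entirely unproved in your sketch --- so either you cite it as a black box or you owe this argument. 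Second, your pure-to-sequential reduction is set up incorrectly. The pure $r$-skeleton of $\Delta(R)$ is not the order complex of a subposet, so there is no literal ``rank-$\geq r$ truncation of $f$'' inducing it; the correct device is that for a semipure poset $R$ the subcomplex generated by facets of dimension $\geq r$ equals $\Delta(R^{(r)})$ for the down-closed subposet $R^{(r)} = \{x \in R : x \leq_R y \text{ for some maximal } y \text{ of rank} \geq r\}$. Moreover, with this truncation the fibers of the restricted map are \emph{not} ``skeleta of the original fibers'' as you assert: by the same fiber-purity extension argument you used for surjectivity, one gets $R^{(r)} = f^{-1}(Q^{(r)})$, so the fibers of the truncated map are the \emph{original} fibers, unchanged --- which is fortunate, since the hypothesis makes them \HCM\ (resp.\ \CMk). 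Your parenthetical ``(sequentially Cohen--Macaulay) fibers'' misreads that hypothesis: the fibers are assumed honestly Cohen--Macaulay, i.e.\ pure, and without that purity both your surjectivity lemma and the truncation identity above fail. In short: a faithful roadmap of the Bj\"orner--Wachs--Welker proof, but the two steps you yourself flag as hard are exactly where that paper's work lies, and one of them is currently formulated in a way that would not go through as written.
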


  \begin{proof}[Proof of Theorem \ref{cm-thm}]
    \
    
    \begin{itemize}
    \item[(i)] Consider the map $\phi : \Gamma(\Delta,P) \rightarrow \Delta$
      that sends an injective word $\omega_1 \cdots \omega_r$ in 
      $\Gamma(\Delta,P)$ to 
      $$\phi(\omega_1 \cdots \omega_r) := \{ \omega_1 ,\ldots, \omega_r\} \in 
      \Delta.$$  
      Clearly, $\phi$ is a monotone map if we consider 
      $\Gamma(\Delta,P)$ and $\Delta$ as 
      posets ordered by the subword order and inclusion respectively.
      Surjectivity is obvious as well. Since the rank of a word
      from $\Gamma(\Delta,P)$ is given by one less than the cardinality
      of its content and since the rank of an element of $\Delta$ is
      again one less than its cardinality the map is rank preserving. 
      Now for a simplex $\sigma \in \Delta$ we study the preimage 
      $\phi^{-1}(\Delta_{\leq \sigma})$, which consists of all 
      $\omega_1 \cdots \omega_r \in \Gamma(\Delta,P)$ for which
      $\{ \omega_1 , \ldots , \omega_r \} \subseteq \sigma$. Hence,
      if we again denote by $P|_\sigma$ the restriction of $P$ to $\sigma$
      we can identify $\phi^{-1}(\Delta_{\leq \sigma})$ with the
      complex $\Gamma(2^\sigma,P)$. Since the full simplex $2^\sigma$
      is shellable, $\Gamma(2^\sigma,P)$ is a shellable Boolean
      cell complex by Theorem \ref{shell-thm} (i). 
      Therefore, $\Gamma(2^\sigma,P)$ is {\HCM} ({\CMk}). Thus
      by Proposition \ref{poset-fiber} it follows that 
      $\Gamma(\Delta,P)$ is {\SHCM} (resp\mbox{.} {\SCMk}) if $\Delta$ is.
    \item[(ii)] Consider the map $\phi : \Gamma/G(\Delta) \rightarrow \Delta$
      that sends a class $[\omega_1 \cdots \omega_r]$ in 
      $\Gamma(\Delta,P)$ to 
      $$\phi([\omega_1 \cdots \omega_r]) := \{ \omega_1 ,\ldots,
      \omega_r\} \in  \Delta.$$  
      As in the first case, we arrive at the conclusion
      that $\phi$ is a rank preserving, surjective, and
      monotone map.
      
      The preimage $\phi^{-1}(\Delta_{\leq \sigma})$ 
      of a simplex $\sigma \in \Delta$ consists of all 
      $[\omega_1 \cdots \omega_r] \in \Gamma/G(\Delta)$ for
      which $\{ \omega_1 , \ldots , \omega_r \} \subseteq
      \sigma$. As a consequence, we can identify
      $\phi^{-1}(\Delta_{\leq \sigma})$ with the complex
      $\Gamma/G|_{\sigma}(2^\sigma)$, where $G|_{\sigma}$ is
      the induced subgraph of $G$ on the set $\sigma$. 
      Since $2^\sigma$ is shellable, so is
      $\Gamma/G|_{\sigma}(2^\sigma)$ by Theorem \ref{shell-thm} (ii). 
      Therefore,
      $\Gamma/G|_{\sigma}(2^\sigma)$ is {\HCM} ({\CMk}). Thus
      by Proposition \ref{poset-fiber} it follows that the poset
      $\Gamma/G(\Delta)$ is {\SHCM} (resp\mbox{.}  
      {\SCMk}) if $\Delta$ is.
    \end{itemize}
  \end{proof}

  \section{Complexes of injective words are partitionable}

  A cell complex $\Gamma$ is {\em partitionable} if $\Gamma$ admits
  a partition into pairwise disjoint intervals $[\sigma_i,\tau_i]$ such
  that each $\tau_i$ is maximal in $\Gamma$. 
  Any shellable Boolean cell complex is partitionable, but the converse is
  not true in general. In fact, somewhat surprisingly, 
  for $P$ equal to the antichain  $A = (S,\leq_A)$, all complexes of
  injective words are partitionable:

  \begin{proof}[Proof of Theorem \ref{partition-thm}]  
   Let $V$ be the vertex set of $\Delta$ and define a
  total order $\preceq$ on $V$. With the vertices in each face of
  $\Delta$ arranged in increasing order from left to right, this
  induces a lexicographic order on the faces. Specifically, let
  $\sigma \le \tau$ if and only if
  either $\tau$ is a prefix of $\sigma$ or if $\sigma$ is
  lexicographically smaller than $\tau$. 
  For example, for the complex on the vertex set $[4]$ (naturally
  ordered) with maximal faces $123$ and $234$, we have that
  $$
  123 \prec 12\prec 13\prec 1\prec 234\prec 23\prec 24\prec 2\prec
  34\prec 3\prec 4\prec \emptyset.
  $$

  For a word $w = w_0 \cdots w_r$, recall that $c(w) = \{w_0, \ldots,
  w_r\}$. Write $\Gamma := \Gamma(\Delta,A)$.
  Define a function $f : \Gamma \rightarrow \Delta$ by
  $f(w) = c(w) \cup f_0(w)$, where
  $f_0(w)$ is minimal with respect to $\preceq$ among all faces
  of $\link_\Delta(c(w))$.
  Note that $f(w)$ is necessarily a maximal face of $\Delta$.
  Define another function $g : \Gamma \rightarrow \Gamma$ by
  letting $g(w)$ be the shortest prefix $v$ of $w$ such that 
  $f(v) = f(w)$. Let $\Gamma^v$ be the family of faces $w$ such that
  $g(w) = v$. It is clear that the families $\Gamma^v$ constitute a
  partition of $\Gamma$.

  Now, consider a nonempty family $\Gamma^v$. We claim that
  $$
  \Gamma^v = \{ vw~|~c(w) \subseteq f_0(v)\}.
  $$
  Namely, every member of $\Gamma^v$ certainly belongs to the set in
  the right-hand side. Moreover, if $vw$ belongs to this set, then
  $f_0(v) = c(w) \cup f_0(vw)$. Namely, suppose that some face $\sigma$
  of $\link_\Delta(c(vw))$ is smaller than $f_0(vw)$. Then 
  $c(w) \cup \sigma$ is smaller than $c(w) \cup f_0(vw)$, which is a
  contradiction.

  As a conclusion, we may write 
  $$
  \Gamma^v = v \cdot \Gamma' = \{v\sigma~|~\sigma \in \Gamma'\},
  $$
  where $\Gamma'$ is the complex of
  injective words derived from the full simplex on the vertex set
  $f_0(v)$. Since $\Gamma'$ is shellable, we may partition $\Gamma'$
  into intervals $[u,w]$ such that each top cell $w$ is maximal in
  $\Gamma'$. This induces a partition of $\Gamma^v$ into intervals
  $[vu,vw]$ such that each top cell $vw$ is maximal in $\Gamma^v$ and
  hence in $\Gamma$.
  \end{proof}

  The $h$-polynomial $h(\Gamma;t) := \sum_i h_i t^i$ of a
  Boolean cell complex of dimension $d$ is defined by 
  $$
  \sum_i h_i t^i = \sum_i f_i t^i
  (1-t)^{d+1-i} 
  \Longleftrightarrow 
  \sum_i f_i t^i
   = \sum_i h_i t^i(1+t)^{d+1-i},
  $$
  where $f_i$ is the number of cells of dimension $i-1$ in
  $\Gamma$.

  \begin{corollary}
    Let $\Delta$ be a pure simplicial complex.
    Then all coefficients of the $h$-polynomial of $\Gamma(\Delta)$
    are nonnegative.
  \end{corollary}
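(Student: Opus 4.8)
The plan is to combine the partitionability established in Theorem~\ref{partition-thm} with the classical principle that a partition of a \emph{pure} complex into face-poset intervals records the $h$-vector as a nonnegative count. The argument has two parts: first reduce to a clean combinatorial setting using purity, then run the standard $f$-to-$h$ computation and read off the answer.

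First I would verify that $\Gamma(\Delta)$ is pure of the same dimension $d$ as $\Delta$. A cell $w$ is inclusionwise maximal in $\Gamma(\Delta)$ precisely when its content $c(w)$ is a maximal face of $\Delta$: if $c(w)$ were contained in a strictly larger face $\sigma \in \Delta$, then appending any letter of $\sigma \setminus c(w)$ to $w$ yields a strictly larger cell. Since $\Delta$ is pure, every maximal face has exactly $d+1$ vertices, so every maximal cell of $\Gamma(\Delta)$ is a word of length $d+1$, i.e.\ a $d$-cell. Thus $\Gamma(\Delta)$ is pure of dimension $d$ and its $h$-polynomial is well defined with this $d$.

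Next, by Theorem~\ref{partition-thm} I would fix a partition of $\Gamma(\Delta)$ into intervals $[\sigma_1,\tau_1],\dots,[\sigma_N,\tau_N]$ with each $\tau_i$ maximal, and set $a_i := \# c(\sigma_i)$. As a subposet of the face poset, each $[\sigma_i,\tau_i]$ is a Boolean lattice on the set $c(\tau_i)\setminus c(\sigma_i)$ of size $(d+1)-a_i$, its cells being exactly the subwords of $\tau_i$ that contain $\sigma_i$ as a subword. Hence the number of $(j-1)$-dimensional cells contained in this interval is $\binom{d+1-a_i}{\,j-a_i\,}$, and summing over the partition gives $f_j = \sum_{i=1}^N \binom{d+1-a_i}{\,j-a_i\,}$. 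Substituting this into the defining relation $\sum_j h_j t^j = \sum_j f_j\, t^j (1-t)^{d+1-j}$, exchanging the order of summation, and applying the binomial theorem to $\bigl(t+(1-t)\bigr)^{(d+1)-a_i}=1$ collapses each interval's contribution to the single monomial $t^{a_i}$, so that $\sum_j h_j t^j = \sum_{i=1}^N t^{a_i}$. Therefore $h_j = \#\{\, i : a_i = j \,\} \ge 0$, which is the assertion.

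The only genuine subtlety, and the step I would be most careful about, is the reduction to purity: it is the point where the hypothesis on $\Delta$ enters. Without it the top cells $\tau_i$ of the partition would have varying dimensions, the uniform binomial count for $f_j$ would fail, and the telescoping that forces each interval to contribute a nonnegative monomial would break down. Everything after the purity reduction is a routine specialization of the well-known fact that a partitionable pure Boolean cell complex has a nonnegative $h$-vector, and requires no further input beyond Theorem~\ref{partition-thm}.
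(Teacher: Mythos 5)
Your proposal is correct and follows essentially the same route as the paper: invoke Theorem~\ref{partition-thm}, use purity of $\Delta$ to ensure every interval top $\tau_i$ has dimension $d$, and conclude that $h(\Gamma(\Delta);t)=\sum_i t^{\#\sigma_i}$. The only difference is that you spell out the binomial computation behind the final identity, which the paper compresses into ``it follows that.''
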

  \begin{proof}
    By Theorem~\ref{partition-thm}, we may partition
    $\Gamma(\Delta)$ into a disjoint 
    union of intervals $[\sigma_i,\tau_i]$ such that each $\tau_i$ has
    maximum dimension $d$. It follows that 
    $h(\Gamma(\Delta);t) = \sum t^{\#\sigma_i}$.
  \end{proof}

  A partition into intervals of a Boolean cell complex $\Gamma$
  induces a matching of cells such that the only unmatched cells in
  the complex are the ones that 
  form singleton intervals $[\tau,\tau]$ in the partition. 
  Specifically, consider the graph on the set of faces of $\Gamma$,
  where we have an edge between two faces $\sigma$ and $\tau$ whenever
  $\sigma < \tau$ and there is no face $\gamma$  
  for which $\sigma < \gamma < \tau$. Thus this graph is the graph of
  the Hasse diagram of $\Gamma$. Now, for two faces $\sigma < \tau$ of
  $\Gamma$, each interval $[\sigma,\tau]$ is a Boolean lattice and
  therefore the associated Hasse diagram has a perfect matching if and
  only if $\sigma \neq \tau$. In particular, this shows that on the
  Hasse diagram of a partitionable Boolean cell complex there is a
  matching whose only unmatched faces are the ones corresponding to
  one-element intervals.
  In Discrete Morse theory \cite{Forman}, matchings of the Hasse diagram of
  the face poset of a regular CW-complex are used to determine the
  topological structure of the complex.  
  However, in general, discrete Morse theory \cite{Forman} does not apply
  to the matchings constructed above. Namely, a matching relevant to
  discrete Morse theory has to satisfy the additional assumption that
  if one directs all edges from the matching upward by dimension and
  all other edges downward, then the resulting directed graph must be
  acyclic.

  For example, consider the complex with maximal faces $12$
  and $34$. The induced order of the faces is
  $$
  12 \prec 1 \prec  2 \prec 34  \prec 3 \prec 4 \prec \emptyset, 
  $$
  which yields $\Gamma^\emptyset = \{\emptyset,1,2,12,21\}$,
  $\Gamma^{3} = \{3,34\}$ and $\Gamma^4 = \{4,43\}$. 
  We obtain a partition consisting of the four intervals
  $[\emptyset,12]$, $[21,21]$, $[3,34]$ and $[4,43]$, which yields
  a matching including the pairs $\{3,34\}$ and $\{4,43\}$. This is
  illegal in terms of discrete Morse theory.

  \end{document}